\newtheorem{thm}{Theorem}[section]
\newtheorem{lem}[thm]{Lemma}
\newtheorem{defn}[thm]{Definition}
\newtheorem{alg}[thm]{Algorithm}
\newtheorem{cor}[thm]{Corollary}
\newtheorem{eg}[thm]{Example}
\newtheorem{rems}[thm]{Remarks}
\newenvironment{proof}{\noindent {\sc Proof}.}
                {\phantom{a} \hfill \framebox[2.2mm]{ } \bigskip}
\newcommand{\ZZ}{\mathbb{Z}}
\def\int{{\rm int}}
\def\sm{{\backslash}}
\newcommand{\F}{{\mathcal{F}}}
\renewcommand{\phi}{\varphi}
\newcommand\mset[1]{\left\{\!\!\left\{#1\right\}\!\!\right\}} 
\title{Using edge cuts \\ to find Euler tours and Euler families in hypergraphs}
\author{Mateja \v{S}ajna\footnote{Corresponding author. Email: msajna@uottawa.ca. Mailing address: Department of Mathematics and Statistics, University of Ottawa, 150 Louis-Pasteur Private, Ottawa, ON, K1N 6N5, Canada.} \; and Andrew Wagner \\
{\small University of Ottawa}}
\begin{document}
\maketitle \baselineskip 17pt

\begin{abstract}
An {\em Euler tour} in a hypergraph is a closed walk that traverses each edge of the hypergraph exactly once, while an {\em Euler family} is a family of closed walks that jointly traverse each edge exactly once and cannot be concatenated. In this paper, we show how the problem of existence of an Euler tour (family) in a hypergraph $H$ can be reduced to the analogous problem in some smaller hypergraphs that are derived from $H$ using an edge cut of $H$. In the process, new techniques of {\em edge cut assignments} and {\em collapsed hypergraphs} are introduced. Moreover, we describe algorithms based on these characterizations that determine whether or not a hypergraph admits an Euler tour (family), and can also  construct an Euler tour (family) if it exists.

\medskip
\noindent {\em Keywords:} Hypergraph; Euler tour; Euler family; edge cut; edge cut assignment; collapsed hypergraph; algorithm.
\end{abstract}

\section{Introduction}

It is common knowledge that a connected graph admits an Euler tour --- that is, a closed walk traversing each edge of the graph exactly once --- if and only if  the graph has no vertices of odd degree. The notion of an Euler tour can be generalized to hypergraphs in the obvious way, and has been studied as such in \cite{LonNar,BahSaj-17}. In addition, Bahmanian and \v{S}ajna \cite{BahSaj-17} also introduced the notion of an {\em Euler family}, which is a family of closed walks that jointly traverse each edge of the hypergraph exactly once and cannot be concatenated. For a connected graph, an Euler family precisely corresponds to an Euler tour; for general connected hypergraphs, however, the two notions give rise to two rather distinct problems: {\sc Euler family}, which is of polynomial complexity \cite{BahSaj-17}, and {\sc Euler tour}, which is NP-complete \cite{LonNar}. The question of how to reduce the search for an Euler family or tour in a hypergraph to smaller hypergraphs is thus particularly pertinent in the case of Euler tours.

An Euler tour (family) is called {\em spanning} if it traverses every vertex of the hypergraph. In \cite{SteSaj}, Steimle and \v{S}ajna showed how to use certain vertex cuts to reduce the problem of finding a spanning Euler tour (family) in a hypergraph $H$ to some smaller hypergraphs derived from subhypergraphs of $H$. In the present paper, with the same goal in mind, we shall use edge cuts instead. This new approach represents a major improvement over the results from \cite{SteSaj}. First, it can be used for general Euler tours and families, not just for spanning Euler tours and families. Second, while the main results from \cite{SteSaj} apply only to hypergraphs with vertex cuts of cardinality at most two, the present approach applies to edge cuts of any size, and hence to all hypergraphs, as every non-trivial hypergraph has an edge cut. In addition, we introduce new elegant techniques of edge cut assignments and collapsed hypergraphs, which will be useful in problems beyond the scope of this paper.

After reviewing the terminology and providing some basic tools in Section~\ref{sec:prelim}, we shall introduce edge cuts in Section~\ref{sec:edge_cuts}, and the first main tool --- edge cut assignments --- in Section~\ref{sec:alpha}. The key result of this section is Theorem~\ref{thm:H^alpha}, where we show that a hypergraph admits an Euler family (tour) if and only if there exists an edge cut assignment such that the associated hypergraph admits an Euler family (tour). This theorem  forms the basis of all reduction results and algorithms.

In Section~\ref{sec:red1}, we present our first main reduction result. Namely, in Theorems~\ref{thm:EF-J} and \ref{thm:ET-J} we show that a hypergraph $H$ with a minimal edge cut $F$ admits an Euler family (tour) if and only if certain subhypergraphs of H (obtained from the connected components of $H \sm F$) admit an Euler family (tour).

We begin Section~\ref{sec:red2} by introducing a collapsed hypergraph of a hypegraph $H$; that is, a hypergraph obtained from $H$ by ``collapsing'' (identifying) the vertices in a given subset of the vertex set of $H$. We then present our second main reduction result. In Theorem~\ref{thm:EF-collapsed}, we show that a hypergraph $H$ admits an Euler family if and only if certain collapsed hypergraphs obtained via an edge cut assignment admit Euler families. The analogous result for Euler tours is presented in Corollaries~\ref{cor:ET-collapsed-1} and \ref{cor:ET-collapsed2}; the former contains the proof of necessity, while the latter contains the proof of sufficiency, but requires an additional assumption.

Each of Sections~\ref{sec:red1} and \ref{sec:red2} concludes with a description of pertinent algorithms that determine whether or not a hypergraph admits an Euler family (tour). Since all of our proofs are constructive, the algorithms can easily be modified to construct an Euler family (tour) if one exists.

\section{Preliminaries}\label{sec:prelim}

We begin with some basic concepts related to hypergraphs, which will be used in later discussions. For any graph- and hypergraph-theoretic terms not defined here, we refer the reader to \cite{BonMur} and \cite{BahSaj-15}, respectively.

A {\em hypergraph} $H$ is an ordered pair $(V,E)$, where $V$ is a non-empty finite set and $E$ is a finite multiset of $2^V$. To denote multisets, we shall use double braces, $\mset{.}$. The elements of $V = V(H)$ and $E = E(H)$ are called  {\em vertices} and  {\em edges}, respectively. A hypergraph is said to be {\em trivial} if it has only one vertex, and {\em empty} if it has no edges. \textcolor{black}{The {\em size} of a hypergraph $H=(V,E)$ is $|H|=\sum_{e \in E} |e|$.}

Let $H = (V,E)$ be a hypergraph, and $u,v\in V$. If $u \ne v$ and there exists an edge $e\in E$ such that $u,v\in e$, then we say that $u$ and $v$ are {\em adjacent (via the edge $e$)}; this is denoted $u \sim_H v$.
If $v\in V$ and  $e\in E$ are such that $v\in e$, then $v$ is said to be {\em incident} with $e$, and the ordered pair $(v,e)$ is called a {\em flag} of $H$. The set of flags of $H$ is denoted by $F(H)$.
The {\em degree} of a vertex $v\in V$ is the number of edges in $E$ incident with $v$, and is denoted by $\deg_H(v)$, or simply $\deg(v)$ when there is no ambiguity.

A hypergraph $H' = (V',E')$ is called a {\em subhypergraph} of the hypergraph $H = (V,E)$ if $V'\subseteq V$ and $E'=\mset{ e \cap V': e \in E''}$ for some submultiset $E''$ of $E$. For any subset $V'\subseteq V$, we define the {\em subhypergraph of $H$ induced by} $V'$ to be the hypergraph $(V',E')$ with $E' = \mset{e\cap V': e\in E, e\cap V'\ne\emptyset}$. Thus, we obtain the subhypergraph induced by $V'$  by deleting all vertices in $V - V'$ from $V$ and from each edge of $H$, and subsequently deleting all empty edges.
For any subset $E'\subseteq E$, we denote the subhypergraph $(V, E - E')$ of $H$ by $H \sm E'$, and for $e \in E$, we write $H  \sm  e$ instead of $H \sm  \{ e\}$. For any multiset $E'$ of $2^V$, the symbol $H+E'$ will denote the hypergraph obtained from $H$ by adjoining all edges in $E'$.

A $(v_0,v_k)$-{\em walk of length} $k$ in a hypergraph $H$ is an alternating sequence $W = v_0e_1v_1\ldots$ $ v_{k-1}e_kv_k$ of (possibly repeated) vertices and edges such that $v_0,\ldots,v_k\in V$, $e_1,\ldots,e_k\in E$, and for each $i\in\lbrace 1,\ldots,k\rbrace$,
the vertices $v_{i-1}$ and $v_i$ are adjacent in $H$ via the edge $e_i$.
Note that since adjacent vertices are by definition distinct, no two consecutive vertices in a walk can be the same. It follows that no walk in a hypergraph contains an edge of cardinality less than 2. The vertices in $V_a(W) = \lbrace v_0,\ldots,v_k\rbrace$ are called the {\em anchors} of $W$, vertices $v_0$ and $v_k$ are the {\em endpoints} of $W$, and $v_1,\ldots,v_{k-1}$ are the {\em internal vertices} of $W$. We also define the edge set of $W$ as $E(W)=\lbrace e_1,\ldots,e_k\rbrace$, and the set of {\em anchor flags} of $W$ as $F(W)=\{ (v_0,e_1), (v_1,e_1), (v_2, e_2), \ldots, (v_{k-1},e_k), (v_k, e_k) \}$.
Walks $W$ and $W'$ in a hypergraph $H$ are said to be {\em edge-disjoint} if $E(W)\cap E(W') = \emptyset$, and {\em anchor-disjoint} if $V_a(W)\cap V_a(W') = \emptyset$.

A walk $W = v_0e_1v_1\ldots v_{k-1}e_kv_k$  is called {\em closed} if $v_0 = v_k$ and $k \ge 2$;  a {\em trail} if the edges $e_1,\ldots,e_k$ are pairwise distinct; a {\em path} if it is a trail and the vertices $v_0,\ldots,v_k$ are pairwise distinct; and a {\em cycle} if it is a closed  trail and the vertices $v_0,\ldots,v_{k-1}$ are pairwise distinct. (Note that in \cite{BahSaj-15}, a ``trail'' was defined as a walk with no repeated anchor flags, and a walk with no repeated edges was called a ``strict trail''. In this paper, we shall consider only strict trails, and hence use the shorter term ``trail'' to mean a ``strict trail''.)

A walk $W = v_0e_1v_1\ldots v_{k-1}e_kv_k$ is said to {\em traverse} a vertex $v$ and edge $e$ if $v \in V_a(W)$ and $e \in E(W)$, respectively. More specifically, the walk $W$ is said to {\em traverse} the edge $e$ {\em via vertex} $v$, as well as {\em traverse} vertex $v$ {\em via edge} $e$, if either $ev$ or $ve$ is a subsequence of $W$.

Vertices $u$ and $v$ are {\em connected} in a hypergraph $H$ if there exists a $(u,v)$-walk  --- or equivalently, a $(u,v)$-path \cite[Lemma 3.9]{BahSaj-15} --- in $H$, and $H$ itself is {\em connected} if every pair of vertices in $V$ are connected in $H$. The {\em connected components} of $H$ are the maximal connected subhypergraphs of $H$ without empty edges. The number of connected components of $H$ is denoted by $c(H)$.

An {\em Euler tour} of a hypergraph $H$ is a closed trail of $H$ traversing every edge of $H$, and a hypergraph is said to be {\em eulerian} if it either admits an Euler tour or is trivial and empty. An {\em Euler family} of $H$ is a set of pairwise edge-disjoint and anchor-disjoint closed trails of $H$ jointly traversing every edge of $H$. Clearly, an Euler family of cardinality 1 corresponds to an Euler tour, and vice-versa.
An Euler family $\cal F$ of $H$ is said to be {\em spanning} if every vertex of $H$ is an anchor of a closed trail in $\cal F$, and an Euler tour of $H$ is {\em spanning} if it traverses every vertex of $H$.

The following two observations will be frequently used tools in the rest of the paper. Their easy proofs are omitted.

\begin{lem}\label{lem:decomposingfamilies}
Let $H$ be a hypergraph with connected components $H_i,$ for $i\in I$.  Then the following hold.
\begin{enumerate}[(i)]
\item If, for each $i \in I$, we have that $H_i$ has an Euler family $\F_i$, then $\bigcup_{i\in I} \F_i$ is an Euler family of $H$.  If each $\F_i$ is spanning in $H_i$, then $\F$ is spanning in $H$.
\item If $H$ has an Euler family $\F$, then $\F$ has a partition $\{ \F_i: i\in I \}$ such that, for each $i\in I$, we have that $\F_i$ is an Euler family of $H_i$.  If $\F$ is spanning in $H$, then   $\F_i$ is spanning in $H_i$, for each $i \in I$.
\end{enumerate}
\end{lem}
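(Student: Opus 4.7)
The proof will be a direct verification of the defining conditions of an Euler family, leveraging the fact that the connected components $H_i$ of $H$ have pairwise disjoint vertex sets and pairwise disjoint edge (multi)sets, and that every edge of $H$ belongs to exactly one $H_i$.

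For part (i), I would set $\F = \bigcup_{i \in I} \F_i$ and check the four properties in turn. Each closed trail $T \in \F_i$ is a closed trail of $H_i$ and hence of $H$, since $H_i$ is a subhypergraph of $H$. Edge-disjointness within a given $\F_i$ holds by hypothesis, while edge-disjointness between $\F_i$ and $\F_j$ for $i \neq j$ follows from $E(H_i) \cap E(H_j) = \emptyset$; the same argument with vertex sets gives anchor-disjointness. Finally, any edge $e \in E(H)$ lies in a unique component $H_i$ and is therefore traversed by some trail in $\F_i \subseteq \F$. The spanning clause is handled by the same partitioning: if each vertex of $H_i$ is an anchor of some trail in $\F_i$, then each vertex of $H$ lying in some component (which is every vertex appearing as an anchor of any closed trail) is covered.

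For part (ii), the key observation is that any closed trail $T$ in $H$ is a connected walk, so its edge set and anchor set lie entirely within one connected component of $H$. Thus there is a well-defined index $i(T) \in I$ such that $T$ is a closed trail of $H_{i(T)}$, and I would define $\F_i = \{T \in \F : i(T) = i\}$. This clearly partitions $\F$. Within each $\F_i$, edge-disjointness and anchor-disjointness are inherited from $\F$. To see that $\F_i$ jointly traverses $E(H_i)$, note that every edge of $H_i$ is an edge of $H$ and so is traversed by some $T \in \F$; but then $T \in \F_{i(T)} = \F_i$. Hence $\F_i$ is an Euler family of $H_i$. The spanning statement follows because any vertex $v$ of $H_i$ which is an anchor of $T \in \F$ forces $T \in \F_i$ by the connectedness argument above.

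There is no real obstacle here; the only subtle point is the observation that a single closed trail is confined to one connected component, which justifies both the partition in (ii) and the joint-traversal claim in (i). Both halves are constructive and essentially bookkeeping, which is presumably why the authors chose to omit the proof.
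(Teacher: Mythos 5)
Your argument is correct: the paper omits this proof as an easy observation, and your componentwise verification supplies exactly the intended routine argument --- in particular, the decisive point that a closed trail is confined to a single connected component (its anchors being pairwise connected through its edges) correctly justifies both the partition in (ii) and the disjointness and traversal bookkeeping in (i). The only cosmetic slip is the parenthetical in your spanning claim for (i): the cleaner statement is that the vertex sets of the components partition $V(H)$, so every vertex of $H$ lies in exactly one $H_i$ and is therefore an anchor of some trail in the corresponding $\F_i$.
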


\begin{lem}\label{lem:subhypergraph}
Let $H_1$ and $H_2$ be hypergraphs such that $V(H_1) \subseteq V(H_2)$ and there exists a bijection $\phi: E(H_1) \to E(H_2)$ satisfying $e \subseteq \phi(e)$ for all $e \in E(H_1)$. Then the following hold.
\begin{enumerate}[(i)]
\item If $H_1$ has an Euler family $\F_1$, then $H_2$ has an Euler family $\F_2$ obtained from $\F_1$ by replacing each edge $e$ with $\phi(e)$.
\item If $H_2$ has an Euler family $\F_2$ such that for all $e \in E(H_2)$ we have that $e$ is traversed in $\F_2$ via vertices in $\phi^{-1}(e)$, then $H_1$ has an Euler family $\F_1$ obtained from $\F_2$ by replacing each edge $e$ with $\phi^{-1}(e)$.
\end{enumerate}
\end{lem}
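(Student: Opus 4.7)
The plan is to prove both parts by direct construction, showing that the bijection $\phi$ transports the defining data of an Euler family between $H_1$ and $H_2$ in the obvious way. The argument is essentially a bookkeeping verification, so I will set up a common template and then note what changes between the two directions.

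For part (i), given an Euler family $\F_1$ of $H_1$, I would take each closed trail $W = v_0 e_1 v_1 \ldots e_k v_k$ in $\F_1$ and replace every edge $e_i$ by $\phi(e_i)$ to obtain a sequence $W' = v_0 \phi(e_1) v_1 \ldots \phi(e_k) v_k$ in $H_2$. Since $\{v_{i-1}, v_i\} \subseteq e_i \subseteq \phi(e_i)$, consecutive vertices remain adjacent in $H_2$ via $\phi(e_i)$, so $W'$ is a closed walk with the same anchors as $W$. Because $\phi$ is a bijection on $E(H_1)$ and $e_1,\ldots,e_k$ are pairwise distinct, so are $\phi(e_1),\ldots,\phi(e_k)$, which makes $W'$ a closed trail. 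Applying the same replacement across all of $\F_1$, edge-disjointness is preserved because $\phi$ is globally injective, anchor-disjointness is immediate since anchors are unchanged, and every edge of $H_2$ is traversed exactly once because $\phi$ is surjective onto $E(H_2)$.

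For part (ii), I would reverse this procedure, replacing every edge $e$ in each trail of $\F_2$ by $\phi^{-1}(e)$. The only nontrivial point is that the resulting sequence must still be a walk in $H_1$, and this is precisely what the extra hypothesis secures: whenever $v_{i-1} e v_i$ appears as a subsequence in a trail of $\F_2$, the assumption that $e$ is traversed via vertices in $\phi^{-1}(e)$ forces $\{v_{i-1}, v_i\} \subseteq \phi^{-1}(e)$, so $v_{i-1}$ and $v_i$ are adjacent in $H_1$ via the edge $\phi^{-1}(e)$. After this, the verification that the trails are pairwise edge-disjoint, anchor-disjoint, and collectively traverse every edge of $H_1$ exactly once is identical to part (i), using that $\phi^{-1}$ is also a bijection.

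There is essentially no obstacle here; the lemma is a transfer principle rather than a substantive theorem. The one subtlety worth highlighting in the writeup is the asymmetry between the two parts: enlarging edges via $\phi$ preserves walks for free, whereas shrinking edges via $\phi^{-1}$ requires the traversal hypothesis, since otherwise a step $v_{i-1} e v_i$ in $\F_2$ with an endpoint outside $\phi^{-1}(e)$ would fail to be a walk in $H_1$.
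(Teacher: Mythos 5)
Your proof is correct: the edge-replacement construction, the observation that $\{v_{i-1},v_i\}\subseteq e\subseteq\phi(e)$ keeps consecutive anchors adjacent in part (i), and the use of the traversal hypothesis to guarantee adjacency via $\phi^{-1}(e)$ in part (ii) are exactly the verifications needed. The paper omits the proof of this lemma as easy, and your direct bookkeeping argument is precisely the intended one, so there is nothing to add.
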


\color{black}
Lemma~\ref{lem:subhypergraph} also gives rise to the following tool, to be used in our algorithms.

\begin{cor}\label{cor:2-edges}
Let $H=(V,E)$ be a hypergraph, and $G$ an even graph with $V(G) \subseteq V$ and $E(G) \subseteq E$. Then the following hold.
\begin{enumerate}[(i)]
\item $H$ has an Euler family if and only if $H \sm E(G)$ does.
\item Assume $G$ is connected and $C$ is a cycle (graph) on the vertex set $V(G)$. Then $H$ has an Euler tour if and only if $\left( H \sm E(G) \right)+E(C)$ does.
\end{enumerate}
\end{cor}

\begin{proof}
\begin{enumerate}[(i)]
\item Assume $H$ has an Euler family $\F$. Let $G_{\F}$ be the graph with vertex set $V$ and $E(G_{\F})=\mset{ \psi(e): e \in E}$, where $\psi(e)=\{ x,y \}$ if $\F$ traverses $e$ via its vertices $x$ and $y$. By Lemma~\ref{lem:subhypergraph}(ii), the graph $G_{\F}$ has an Euler family, and hence is even. Therefore $G_{\F} \sm E(G)$ is even and possesses an Euler family, and hence by Lemma~\ref{lem:subhypergraph}(i), the hypergraph $H \sm E(G)$ possesses an Euler family.

    Conversely, assume that $H \sm E(G)$ admits an Euler family $\F_1$, and let $\F_G$ be an Euler family of $G$. Then an Euler family of $H$ is obtained from $\F_1 \cup \F_G$ is by concatenating any closed trails with a common anchor.

\item Let $H'=\left( H \sm E(G) \right)+E(C)$.

Assume $H$ has an Euler tour $T$. Let $G_{T}$ be the graph with vertex set $V$ and $E(G_{T})=\mset{ \psi(e): e \in E}$, where $\psi(e)=\{ x,y \}$ if $T$ traverses $e$ via its vertices $x$ and $y$. By Lemma~\ref{lem:subhypergraph}(ii), the graph $G_{T}$ has an Euler tour, and hence is even and connected. Let $G_T'=\left(G_T \sm E(G)\right) + E(C)$. Then $G_T'$ is a connected even graph, and hence admits an Euler tour. By Lemma~\ref{lem:subhypergraph}(i), the hypergraph $H'$ admits an Euler tour.

The converse is obtained similarly, interchanging the roles of $H$ and $H'$, and those of $G$ and $C$.
\end{enumerate}
\end{proof}

\color{black}

\section{Edge cuts in hypergraphs}\label{sec:edge_cuts}

As in graphs \cite{BonMur}, an {\em edge cut} in a hypergraph $H=(V,E)$ is a (multi)set of edges of the form $[S,V-S]_H$ for some non-empty proper subset $S$ of $V$. Here, for any $S, T \subseteq V$, we denote
$$[S,T]_H= \mset{ e \in E: e \cap S \ne \emptyset \mbox{ and } e \cap T \ne \emptyset}.$$
An edge $e$ in a hypergraph $H$ is said to be a {\em cut edge} of $H$ if $c(H \sm e)>c(H)$. Thus, an edge $e$ is a cut edge if and only if $\{ e \}$ is an edge cut.

\begin{lem}\label{lem:minedgecut}
Let $H=(V,E)$ be a hypergraph and $F\subseteq E.$ Then $H \sm F$ is disconnected if and only if $H$ has an edge cut $F' \subseteq F$.
\end{lem}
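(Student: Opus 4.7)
The plan is to establish both directions via the natural correspondence between edge cuts of $H$ and vertex partitions separating vertices in $H \sm F$.

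For the forward direction, I would assume $H \sm F$ is disconnected and fix vertices $u, v \in V$ lying in distinct connected components of $H \sm F$. Let $S$ be the set of all vertices reachable from $u$ by walks in $H \sm F$ (including $u$ itself). Then $u \in S$ and $v \notin S$, so $S$ is a non-empty proper subset of $V$, making $F' := [S, V-S]_H$ an edge cut of $H$. To show $F' \subseteq F$, I would argue by contradiction: take $e \in [S, V-S]_H$ and suppose $e \notin F$; then $e$ is an edge of $H \sm F$ containing some $s \in S$ and some $t \in V - S$, so $s$ and $t$ are adjacent in $H \sm F$, giving a walk from $u$ to $t$ in $H \sm F$. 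This contradicts $t \notin S$.

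For the converse, suppose $H$ has an edge cut $F' = [S, V-S]_H \subseteq F$ with $\emptyset \ne S \subsetneq V$. Pick any $u \in S$ and $v \in V - S$. I claim there is no $(u,v)$-walk in $H \sm F$, which will prove that $H \sm F$ is disconnected. Assume for contradiction such a walk $W = v_0 e_1 v_1 \ldots e_k v_k$ exists with $v_0 = u$ and $v_k = v$. Since $v_0 \in S$ and $v_k \in V - S$, there is a smallest index $i$ with $v_{i-1} \in S$ and $v_i \in V - S$. Then $e_i$ contains both $v_{i-1}$ and $v_i$, so $e_i \in [S, V-S]_H = F' \subseteq F$, contradicting the fact that $W$ is a walk in $H \sm F$.

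The argument is essentially a direct translation of the classical graph-theoretic proof, and I anticipate no serious obstacle. The only subtlety worth flagging is that the definition given in the paper permits $[S, V-S]_H$ to be empty (e.g.\ when $H$ is already disconnected along $S$), but this edge case causes no trouble: in the forward direction, an empty $F'$ is trivially a subset of $F$; in the backward direction, an empty $F'$ means there are no edges of $H$ between $S$ and $V - S$ at all, so certainly none in $H \sm F$, and the disconnection argument above still goes through.
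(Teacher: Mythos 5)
Your proposal is correct and follows essentially the same route as the paper: in the forward direction your set $S$ of vertices reachable from $u$ in $H \sm F$ is exactly the vertex set of a connected component of $H \sm F$, which is what the paper uses, and in the converse you simply spell out in detail the paper's observation that no edge of $H \sm F$ meets both $S$ and $V-S$. Your handling of the possibly empty cut $[S,V-S]_H$ is a fine (and harmless) extra remark.
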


\begin{proof}
Assume $H \sm F$ is disconnected, let $H_1$ be one connected component of $H \sm F$, and $S=V(H_1)$. Then $F'=[S,V-S]_H$ is an edge cut of $H$ contained in $F$.

Conversely, assume $H$ has an edge cut $F' \subseteq F$, where $F'=[S,V-S]_H$ for $\emptyset \subsetneq S \subsetneq V$. Then $H \sm F$ has no edge intersecting both $S$ and $V-S$, so it is disconnected.
\end{proof}

As we shall see in the next lemma, minimal edge cuts --- that is, edge cuts that are not properly contained in other edge cuts --- have a very nice property that will be much exploited in subsequent results. Note that, for a hypergraph $H=(V,E)$ and its subhypergraph $H'$, we say that an edge $e \in E$ {\em intersects} $H'$ if $e \cap V(H')\ne \emptyset$.

\begin{lem}\label{lem:intersects}
Let $H=(V,E)$ be a hypergraph. An edge cut $F$ of $H$ is minimal if and only if every edge of $F$ intersects each connected component of $H \sm F$.
\end{lem}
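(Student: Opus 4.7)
The plan is to prove each direction by contradiction, using Lemma~\ref{lem:minedgecut} (or the definition of edge cut directly) as the key tool for producing smaller edge cuts.

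For the forward direction, I would argue the contrapositive. Suppose some edge $e \in F$ fails to intersect a connected component $H_1$ of $H \sm F$; set $V_1 = V(H_1)$. Since $V_1$ is a union of vertex sets of components of $H \sm F$, no edge of $E \sm F$ intersects both $V_1$ and $V \sm V_1$, so $[V_1, V\sm V_1]_H \subseteq F$. Because $e \cap V_1 = \emptyset$, the edge $e$ does not belong to $[V_1, V\sm V_1]_H$, so this inclusion is strict. Since $V_1$ is a non-empty proper subset of $V$ (properness follows from $H \sm F$ having at least two components, which is guaranteed by $F$ being an edge cut), $[V_1, V\sm V_1]_H$ is an edge cut properly contained in $F$, contradicting the minimality of $F$.

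For the reverse direction, assume every edge of $F$ intersects every connected component of $H \sm F$, and suppose for contradiction that there exists an edge cut $F' \subsetneq F$, say $F' = [S, V\sm S]_H$ with $\emptyset \subsetneq S \subsetneq V$. Because $F' \subseteq F$, the hypergraph $H \sm F$ is a subhypergraph of $H \sm F'$, so every connected component of $H \sm F$ is contained in a connected component of $H \sm F'$, and hence is contained in either $S$ or $V \sm S$. Since $S$ and $V\sm S$ are both non-empty, there is a component $H_i$ of $H \sm F$ with $V(H_i) \subseteq S$ and another component $H_j$ with $V(H_j) \subseteq V \sm S$. Choose any $e \in F \sm F'$; the hypothesis forces $e$ to meet both $V(H_i)$ and $V(H_j)$, hence to meet both $S$ and $V \sm S$, so $e \in [S, V\sm S]_H = F'$, a contradiction.

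The main thing to be careful about is that ``connected component'' in this paper excludes subhypergraphs with empty edges but does include isolated vertices as trivial components, so the partition of $V$ by vertex sets of components is genuine, which is what makes the containment argument in the reverse direction go through cleanly. A minor edge case is $F = \emptyset$, but then both conditions hold vacuously, so no separate treatment is needed.
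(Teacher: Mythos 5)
Your proof is correct. The forward direction is essentially the paper's argument: for a component $H_1$ missed by $e$, you observe $[V(H_1),V-V(H_1)]_H \subseteq F-\{e\} \subsetneq F$, which is exactly the contradiction the paper derives (the paper writes $S=V(H_1)$, $F'=F-\{f\}$). For the converse you take a genuinely different, more self-contained route: the paper picks $f\in F-F'$, notes that adding $f$ back to $H\sm F$ gives a connected spanning subhypergraph of $H\sm F'$, and then invokes Lemma~\ref{lem:minedgecut} to conclude that $F'$ cannot be an edge cut; you instead work directly with the bipartition $\{S,V-S\}$ defining $F'$, showing that every component of $H\sm F$ lies wholly inside $S$ or inside $V-S$, that each side contains at least one component, and hence that any $e\in F-F'$ must meet both sides and so belong to $[S,V-S]_H=F'$, a contradiction. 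Your version avoids both the connectivity-restoration step and Lemma~\ref{lem:minedgecut}, at the cost of a little partition bookkeeping; the paper's version is shorter because it reuses Lemma~\ref{lem:minedgecut} and highlights the useful intuition that re-inserting any single edge of a minimal cut reconnects $H\sm F$. Your remarks about isolated vertices counting as (trivial) components and about the degenerate case $F=\emptyset$ are accurate but not strictly needed.
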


\begin{proof}
Assume $F$ is a minimal edge cut of $H$. Suppose $H_1$ is a connected component of $H \sm F$, and $f \in F$ is such that $f$ does not intersect $H_1$. Let $S=V(H_1)$ and $F'=F-\{ f \}$. Then $[S,V-S]_H \subseteq F' \subsetneq F$, contradicting the minimality of $F$. Thus every edge of $F$ intersects each connected component of $H \sm F$.

Conversely, assume each edge of $F$ intersects every connected component of $H\sm F$.  Suppose $F'$ is an edge cut of $H$ and $F' \subsetneq F$. Take any $f \in F-F'$. Since $f$ intersects every connected component of $H\sm F$, the hypergraph $H+f=H \sm F'$ is connected. Hence by Lemma~\ref{lem:minedgecut}, the set $F'$ contains no edge cut of $H$, a contradiction. Hence the edge cut $F$ is minimal.
\end{proof}

\color{black}

In our algorithms, it will be helpful to find minimal edge cuts containing edges of cardinality at least 3. The proof of the next lemma explains how this can be done.

\begin{lem}\label{lem:min-cut-large-edge}
Let $H=(V,E)$ be a connected hypergraph, and $e^\ast$ an edge of $H$ of cardinality at least 3. Then $H$ admits a minimal edge cut $F^\ast$ such that $e^\ast \in F^\ast$ or $F^\ast$ has no edges of cardinality less than 3.
\end{lem}

\begin{proof}
Fix a vertex $x \in e^\ast$, and let $F=\mset{ e\in E: x \in e}$. Then $F$ is an edge cut; let $H_1,\ldots,H_k$ be the connected components of $H \sm F$. Without loss of generality, we may assume that $e^\ast$ intersects $H_1$.

Let $F_1=\mset{ e \in F: e \cap V(H_1) \ne \emptyset }$, so $e^\ast \in F_1$. Since vertices of $H_1$ are disconnected from vertex $x$ in $H \sm F_1$, by Lemma~\ref{lem:minedgecut}, there exists a minimal edge cut $F^\ast \subseteq F_1$.

Let $H_0^\ast,H_1^\ast,\ldots,H_{\ell}^\ast$ be the connected components of $H \sm F^\ast$. Observe that, without loss of generality, we may assume that $x \in V(H_0)^\ast$, while $H_1^\ast,\ldots,H_{\ell}^\ast \in \{ H_1,\ldots, H_k \}$.

\color{black}

If $\ell \ge 2$, then by Lemma~\ref{lem:intersects}, each edge of $F^\ast$ has cardinality at least 3. Hence we may assume that $\ell=1$. If $H_1^\ast=H_1$, then $e^\ast \in F^\ast$, and we are done. Otherwise, each edge in $\F^\ast$ contains $x$, as well as a vertex in $H_1$ and a vertex in $H_1^\ast$, and hence has cardinality at least 3.

We conclude that  $e^\ast \in F^\ast$ or each edge of $F^\ast$ has cardinality at least 3, as claimed.
\end{proof}

\color{black}

\section{Edge Cut Assignments}\label{sec:alpha}

We are now ready to present the first of the two main tools that we shall use to study eulerian properties of hypergraphs via their edge cuts; namely, an edge cut assignment $\alpha$ associated with an edge cut $F$ of a hypergraph $H$, which maps each edge of $F$ to an unordered pair of connected components (or, more generally, unions of connected components) of $H \sm F$.

Note that, for a finite set $I$, we denote the set of all unordered pairs of elements in $I$, with repetitions in a pair permitted, by $I^{[2]}$; that is, $I^{[2]}=\{ ij: i,j \in I \}$. Thus, $|I^{[2]}|=\frac{1}{2}n(n+1)$.

\begin{defn}\label{defn:G}{\rm
Let $H$ be a connected hypergraph with a minimal edge cut $F$, and let $\{ V_i: i \in I\}$ be a partition of $V(H)$ such that each $V_i$ is a union of the vertex sets of the connected components of $H \sm F$.
\begin{itemize}
\item A mapping $\alpha: F \to I^{[2]}$ is called an {\em edge cut assignment} (associated with $F$ and $H$) if, for all $f \in F$, we have that $\alpha(f)=ij$ implies that $f \cap V_i \ne \emptyset$ and $f \cap V_j \ne \emptyset$, and $\alpha(f)=i$ implies that $|f \cap V_i| \ge 2$.

\item An edge cut assignment $\alpha: F \to I^{[2]}$ is called {\em standard} if $V_i$, for each $i \in I$, is the vertex set of a connected component of $H \sm F$.

\item Given an edge cut assignment $\alpha$, we define, for each $e \in E(H)$,
$$
e^{\alpha}=\left\{
\begin{array}{ll}
e \cap \left( V_i \cup V_j \right) & \mbox{ if } e \in F \mbox{ and } \alpha(e)=ij \\
e & \mbox{ if } e \not\in F
\end{array}
\right..
$$
\item A hypergraph $H^{\alpha}$, defined by
$$V(H^{\alpha})=V(H) \quad \mbox{and} \quad E(H^{\alpha})=\mset{ e^{\alpha}: e \in E(H) },$$
is called the {\em hypergraph  associated with $H$ and the edge cut assignment $\alpha$}.
\item A multigraph $G^{\alpha}$ with vertex set $V(G^{\alpha})=I$ and edge multiset $E(G^{\alpha})=\mset{ \alpha(f): f \in F}$ is called the {\em multigraph associated with the edge cut assignment $\alpha$}. Thus
    $\alpha$ can be viewed as a bijection $F \to E(G^{\alpha})$.
\end{itemize}
}
\end{defn}

The usefulness of these concepts will be conveyed in the following three observations, the last of which yields necessary and sufficient conditions for a hypergraph to admit an Euler family (tour) via an edge cut assignment $\alpha$ and the associated hypergraph $H^{\alpha}$.

\begin{lem}\label{lem:HG^alpha}
Let $H$ be a connected hypergraph with a minimal edge cut $F$, and let $\{ V_i: i \in I\}$ be a partition of $V(H)$ into unions of the vertex sets of the connected components of $H \sm F$. Furthermore, let $\alpha: F \to I^{[2]}$ be an edge cut assignment.
If $H^{\alpha}$ has an Euler family (tour), then so does $G^{\alpha}$.
\end{lem}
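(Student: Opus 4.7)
The plan is to construct an Euler family of $G^\alpha$ by projecting the given Euler family of $H^\alpha$ through the quotient map $\pi: V(H) \to I$ defined by $\pi(v) = i$ for $v \in V_i$. A crucial preliminary observation is that every edge $e \in E(H) \sm F$ is contained in a single $V_i$: since $e \in E(H \sm F)$, its vertices lie in one connected component of $H \sm F$, and each $V_i$ is a union of such component vertex sets. Consequently $\pi$ is constant on the vertices of any non-$F$ edge, so non-$F$ edges play no role in transitions between parts of the partition.

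Given a closed trail $T = v_0 e_1 v_1 \ldots e_k v_k$ (with $v_0 = v_k$) in an Euler family $\F$ of $H^\alpha$, let $e_{j_1}, e_{j_2}, \ldots, e_{j_\ell}$ be the subsequence of its edges that lie in $F$. I would define the projected walk $\tilde{T}$ in $G^\alpha$ as the alternating sequence $\pi(v_{j_1-1})\, \alpha(e_{j_1})\, \pi(v_{j_1})\, \alpha(e_{j_2}) \ldots \alpha(e_{j_\ell})\, \pi(v_{j_\ell})$. Three things would need verifying: (a) between consecutive $F$-edges $e_{j_s}$ and $e_{j_{s+1}}$ only non-$F$ edges occur, so the preliminary observation forces $\pi(v_{j_s}) = \pi(v_{j_{s+1}-1})$ and the sequence links up; (b) for each $e_{j_s} \in F$ with $\alpha(e_{j_s}) = ab$, the bounding vertices satisfy $v_{j_s-1}, v_{j_s} \in e_{j_s}^\alpha = e_{j_s} \cap (V_a \cup V_b)$, so $\pi(v_{j_s-1})$ and $\pi(v_{j_s})$ both lie in $\{a,b\}$, the endpoint set of the edge $\alpha(e_{j_s})$ in $G^\alpha$; and (c) $\tilde{T}$ inherits closedness from $T$ (using $\pi(v_0) = \pi(v_k)$) and the trail property from the bijectivity of $\alpha: F \to E(G^\alpha)$ combined with $T$ being a trail.

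The family $\{\tilde{T}: T \in \F,\, E(\tilde{T}) \neq \emptyset\}$ then consists of edge-disjoint closed trails in $G^\alpha$ that jointly cover each edge of $G^\alpha$ exactly once, because $\alpha$ is a bijection from $F$ onto $E(G^\alpha)$ and each $f \in F$ is traversed exactly once across $\F$. To promote this collection to a bona fide Euler family I would iteratively concatenate any two projected trails sharing an anchor in $I$ into one larger closed trail, a standard rearrangement that preserves edge-disjointness and terminates with an anchor-disjoint family. In the Euler tour case, where $\F = \{T\}$ is a single trail, its projection $\tilde{T}$ is itself a closed trail covering every edge of $G^\alpha$, hence an Euler tour. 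I expect step (b) --- ensuring that the endpoints of each projected edge match those of its image in $G^\alpha$ --- to be the main subtlety, since it is what ties the combinatorial structure of $H^\alpha$ back to that of $G^\alpha$ and relies directly on the defining property $e^\alpha = e \cap (V_i \cup V_j)$ of the assignment $\alpha$.
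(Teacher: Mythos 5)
Your plan is the same projection argument the paper uses: cut each trail of $\F$ at its $F$-edges, send each within-part segment to its part index, and send each $f\in F$ to $\alpha(f)$; your additional concatenation step is in fact a point where you are more careful than the paper, since two anchor-disjoint trails of $H^\alpha$ may visit different vertices of the same part $V_i$ and hence project to closed trails sharing the anchor $i$. The genuine problem is your step (b), which you rightly identify as the crux but do not actually establish: for the projected sequence to be a walk in $G^\alpha$ you need the two projections $\pi(v_{j_s-1}),\pi(v_{j_s})$ to be \emph{exactly} the two endpoints $a$ and $b$ of $\alpha(e_{j_s})$ (when $a\ne b$), not merely to lie in $\{a,b\}$. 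Since $e^\alpha=e\cap(V_a\cup V_b)$ may contain two or more vertices of a single part, a trail of $H^\alpha$ can traverse $e^\alpha$ via two vertices of $V_a$, and then the projected step reads $a\,(ab)\,a$, which is not a walk step in $G^\alpha$; your argument breaks there and cannot be completed as stated.

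In fact no argument can close this gap without an extra hypothesis, because the implication in this generality fails. Take $V(H)=\{a_1,a_2,a_3,b_1,b_2\}$ with edges $e_1=\{a_1,a_2\}$, $e_2=\{b_1,b_2\}$, $f_1=\{a_1,a_2,a_3,b_1\}$, $f_2=\{a_1,a_3,b_1\}$, $f_3=\{a_1,a_3,b_2\}$. Then $F=\{f_1,f_2,f_3\}$ is a minimal edge cut, as each $f_i$ meets all three components $\{a_1,a_2\}$, $\{a_3\}$, $\{b_1,b_2\}$ of $H\sm F$. With $V_0=\{a_1,a_2,a_3\}$, $V_1=\{b_1,b_2\}$ and $\alpha(f_1)=\alpha(f_2)=\alpha(f_3)=01$ we get $H^\alpha=H$, which has the Euler family consisting of the closed trails $a_1\,e_1\,a_2\,f_1\,a_1$ and $a_3\,f_2\,b_1\,e_2\,b_2\,f_3\,a_3$; yet $G^\alpha$ consists of three parallel edges joining $0$ and $1$ and so has no Euler family. (A similar example can be arranged for a standard assignment, so this is not an artifact of allowing unions of components.) To be fair, the paper's own proof tacitly makes the same assumption: its displayed decomposition of $T$ presumes that every traversed $f^\alpha$ links a segment in one part of $\alpha(f)$ to a segment in the other. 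In every application of the lemma (for instance in the proof of Theorem~\ref{thm:EF-J}) the assignment $\alpha$ is the one induced by an Euler family of $H$, and the Euler family of $H^\alpha$ is the inherited one from Lemma~\ref{lem:H^alpha}(i), for which each $f$ with $\alpha(f)=ij$, $i\ne j$, really is traversed via one vertex of $V_i$ and one of $V_j$; under that additional property your projection, like the paper's, goes through. So you should add this traversal property as an explicit hypothesis (or restrict to the induced assignment); without it, step (b) is a genuine gap.
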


\begin{proof}
Let $\F$ be an Euler family of $H^{\alpha}$, and $\F'$ the subset of $\F$ consisting of all closed trails that traverse at least on edge of $F$. Take any closed trail $T \in \F'$.   Then, without loss of generality, $T$ is of the form $T=v_0 T_0 v_0' f_0^{\alpha} v_1 T_1 v_1' f_1^{\alpha} v_2 \ldots v_{k-1} T_{k-1} v_{k-1}' f_{k-1}^{\alpha} v_0$ for some $f_0,\ldots,f_{k-1} \in F$ and $(v_i,v_i')$-trails $T_i$ in $H_i$, for $i \in \{ 0, \ldots, k-1 \}$. Obtain the sequence $T^{\alpha}$ from $T$ by replacing each subsequence $v_i T_i v_i'$ with $i$, and each edge $f_i^{\alpha}$ with $\alpha(f_i)$. Then $T^{\alpha}$ is a closed trail in $G^{\alpha}$, and $\{ T^{\alpha}: T \in \F' \}$ is an Euler family of $G^{\alpha}$.

Moreover, if $T$ is an Euler tour of $H^{\alpha}$, then $T^{\alpha}$ is Euler tour of $G^{\alpha}$.
\end{proof}

\begin{lem}\label{lem:H^alpha}
Let $H$ be a connected hypergraph with a minimal edge cut $F$, and let $\{ V_i: i \in I\}$ be a partition of $V(H)$ into unions of the vertex sets of the connected components of $H \sm F$.
\begin{enumerate}[(i)]
\item Suppose $H$ has an Euler family $\F$. Let $\alpha: F \to I^{[2]}$ be an edge cut assignment defined by $\alpha(f)=ij$ if the edge $f \in F$ is traversed by a trail in $\F$ via a vertex in $V_i$ and a vertex in $V_j$ (where $i=j$ is possible). Then  the hypergraph $H^{\alpha}$ has an Euler family obtained from $\F$ by replacing each $f \in F$ with $f^{\alpha}$.
\item Conversely, if for some edge cut assignment $\alpha: F \to I^{[2]}$, the associated hypergraph $H^{\alpha}$ has an Euler family $\F^{\alpha}$, then $H$ has an Euler family obtained from $\F^{\alpha}$ by replacing each $f^{\alpha}$, for $f \in F$, with $f$.
\end{enumerate}
\end{lem}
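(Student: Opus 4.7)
The plan is to reduce both directions to Lemma~\ref{lem:subhypergraph}, using the obvious bijection between edge multisets: send each $e \in E(H)$ (viewed as an indexed element of the multiset) to $e^{\alpha} \in E(H^{\alpha})$. Since $V(H^{\alpha})=V(H)$ and $e^{\alpha}\subseteq e$ for every $e\in E(H)$, this bijection automatically satisfies the containment hypothesis of Lemma~\ref{lem:subhypergraph} (in the direction $E(H^{\alpha})\to E(H)$).

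For part (i), I would first verify that the map $\alpha$ is well-defined and is an edge cut assignment in the sense of Definition~\ref{defn:G}. Because $\F$ is an Euler family, each $f\in F$ is traversed exactly once, by some closed trail at some subsequence $vfv'$; since consecutive vertices in a walk are by definition distinct, the uniquely determined parts $V_i$ and $V_j$ containing $v$ and $v'$ give $\alpha(f)=ij$ unambiguously. The same two anchors supply $v\in f\cap V_i$ and $v'\in f\cap V_j$ with $v\ne v'$, which witness the conditions $f\cap V_i\ne\emptyset$ and $f\cap V_j\ne\emptyset$ when $i\ne j$, or $|f\cap V_i|\ge 2$ when $i=j$. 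Then I would apply Lemma~\ref{lem:subhypergraph}(ii), taking $H_1=H^{\alpha}$ and $H_2=H$: the hypothesis that each $e\in E(H)$ is traversed in $\F$ via vertices in its preimage $e^{\alpha}$ holds trivially when $e\notin F$, and by the very definition of $\alpha$ when $e=f\in F$ (the two traversal anchors lie in $V_i\cup V_j$ and in $f$, hence in $f^{\alpha}=f\cap(V_i\cup V_j)$). The resulting Euler family of $H^{\alpha}$ is precisely the one described in the statement.

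For part (ii), I would apply Lemma~\ref{lem:subhypergraph}(i) with $H_1=H^{\alpha}$ and $H_2=H$, for which the sole containment hypothesis reduces to $f^{\alpha}\subseteq f$ for $f\in F$---immediate from the definition of $f^{\alpha}$---and to equality for all remaining edges. I do not anticipate any substantive obstacle: the lemma is essentially a repackaging of Lemma~\ref{lem:subhypergraph} in the vocabulary of edge cut assignments, and neither the minimality of $F$ nor the specific structure of the partition $\{V_i:i\in I\}$ beyond what is built into Definition~\ref{defn:G} is invoked here. The one point requiring a moment of care is the ``diagonal'' case $\alpha(f)=i$ in part~(i), where one must explicitly observe that the two anchor vertices witnessing the traversal of $f$ are distinct, so that the requirement $|f\cap V_i|\ge 2$ is genuinely met.
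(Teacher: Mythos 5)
Your proposal is correct and follows essentially the same route as the paper: both directions are reduced to Lemma~\ref{lem:subhypergraph} via the bijection $e^{\alpha}\mapsto e$ (part (ii) of that lemma for the forward direction, part (i) for the converse). Your additional verification that $\alpha$ is a well-defined edge cut assignment, including the diagonal case $\alpha(f)=i$, is a careful touch the paper leaves implicit but changes nothing substantive.
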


\begin{proof}
\begin{enumerate}[(i)]
\item Define a bijection $\phi: E(H^{\alpha}) \to E(H)$ by $\phi(e^{\alpha})=e$, for all $e \in E(H)$. Then $e^{\alpha} \subseteq  \phi(e^{\alpha}) $ for all $e^{\alpha} \in E(H^{\alpha})$. By Lemma~\ref{lem:subhypergraph}(ii), since each edge $e$ of $H$ is traversed in $\F$ via vertices in $\phi^{-1}(e)$, an Euler family of $H^{\alpha}$ is obtained from $\F$ by replacing each edge $e$ with $\phi^{-1}(e)$, which effectively means replacing each $f \in F$ with $f^{\alpha}$.
\item Define $\phi$ as in (i) and use Lemma~\ref{lem:subhypergraph}(i).
\end{enumerate}
\end{proof}

\begin{thm}\label{thm:H^alpha}
Let $H$ be a connected hypergraph with a minimal edge cut $F$, and let $\{ V_i: i \in I\}$ be a partition of $V(H)$ into unions of the vertex sets of the connected components of $H \sm F$. Then
\begin{enumerate}[(i)]
\item $H$ has an Euler family if and only if for some edge cut assignment $\alpha: F \to I^{[2]}$, each connected component of $H^{\alpha}$ has an Euler family; and
\item $H$ has an Euler tour if and only if for some edge cut assignment $\alpha: F \to I^{[2]}$, the hypergraph $H^{\alpha}$ has a unique non-empty connected component, which has an Euler tour.
\end{enumerate}
\end{thm}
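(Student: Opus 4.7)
The plan is to deduce this theorem by combining Lemma~\ref{lem:H^alpha}, which converts Euler families back and forth between $H$ and $H^{\alpha}$ for a suitable edge cut assignment $\alpha$, with Lemma~\ref{lem:decomposingfamilies}, which partitions an Euler family of a hypergraph along its connected components.

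For part (i), the forward direction starts from an Euler family $\F$ of $H$, invokes Lemma~\ref{lem:H^alpha}(i) to produce an edge cut assignment $\alpha$ and an Euler family $\F^{\alpha}$ of $H^{\alpha}$ obtained by replacing each $f \in F$ by $f^{\alpha}$, and then applies Lemma~\ref{lem:decomposingfamilies}(ii) to $\F^{\alpha}$ to extract an Euler family of each connected component of $H^{\alpha}$. For the converse, given such an $\alpha$ with each connected component of $H^{\alpha}$ admitting an Euler family, Lemma~\ref{lem:decomposingfamilies}(i) assembles these into an Euler family of $H^{\alpha}$, whereupon Lemma~\ref{lem:H^alpha}(ii) produces an Euler family of $H$ by substituting each $f^{\alpha}$ back with $f$.

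Part (ii) proceeds along the same lines, but exploiting the observation that the bijection in Lemma~\ref{lem:H^alpha} acts edge by edge and preserves the closed trails themselves (it only replaces the edges they use), so an Euler family of cardinality one on one side corresponds to an Euler family of cardinality one on the other. Concretely, if $H$ has an Euler tour $T$, apply Lemma~\ref{lem:H^alpha}(i) to $\F=\{T\}$ to obtain an Euler tour $T^{\alpha}$ of $H^{\alpha}$; since $T^{\alpha}$ traverses every edge of $H^{\alpha}$, all edges lie in a single connected component of $H^{\alpha}$, and the remaining components (if any) consist of isolated vertices, so $H^{\alpha}$ has a unique non-empty connected component, which inherits $T^{\alpha}$ as an Euler tour. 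Conversely, if for some $\alpha$ the hypergraph $H^{\alpha}$ has a unique non-empty connected component with Euler tour $T^{\alpha}$, then $\{T^{\alpha}\}$ is an Euler family of $H^{\alpha}$ (the empty trivial components contribute nothing), so Lemma~\ref{lem:H^alpha}(ii) yields a single closed trail traversing every edge of $H$, i.e., an Euler tour of $H$.

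The main conceptual step — not really an obstacle but the one that needs to be stated carefully — is the cardinality-preservation argument for Euler tours in part (ii): one must note that the edge substitution $f \leftrightarrow f^{\alpha}$ happens inside each trail without merging or splitting trails, so an Euler family of size one corresponds to an Euler family of size one, and one must handle the possible isolated-vertex components of $H^{\alpha}$ that can appear when the partition $\{V_i\}$ is non-standard or when $\alpha$ collapses all of an edge onto one side. Once that bookkeeping is in place, both equivalences follow immediately from the two cited lemmas.
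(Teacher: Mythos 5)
Your proposal is correct and follows essentially the same route as the paper: the paper's proof likewise rests on the observation that, by Lemma~\ref{lem:H^alpha}, $H$ has an Euler family of cardinality $k$ if and only if $H^{\alpha}$ does for some edge cut assignment $\alpha$, combined with Lemma~\ref{lem:decomposingfamilies} for part (i) and the remark that $H^{\alpha}$ has an Euler tour exactly when it has a unique non-empty connected component admitting one for part (ii). Your explicit attention to the cardinality preservation and the isolated-vertex components simply spells out what the paper treats as immediate.
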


\begin{proof}
Observe that, by Lemma~\ref{lem:H^alpha}, a hypergraph $H$ has an Euler family of cardinality $k$ if and only if for some edge cut assignment $\alpha$, the associated hypergraph $H^{\alpha}$ has an Euler family of cardinality $k$.
\begin{enumerate}[(i)]
\item Since by Lemma~\ref{lem:decomposingfamilies}, $H^{\alpha}$ has an Euler family if and only if each of its connected components has an Euler family, the statement follows.
\item From the above observation, $H$ has an Euler tour if and only if  $H^{\alpha}$, for some $\alpha$,  has an Euler tour. Since it is clear that $H^{\alpha}$ has an Euler tour if and only if it has a unique non-empty connected component, which itself has an Euler tour, the statement follows.
\end{enumerate}
\end{proof}

We point out that Theorem~\ref{thm:H^alpha} forms the basis for all other, more specific reductions, as well as for the algorithms presented in the rest of this paper.

\section{Reduction using standard edge cut assignments}\label{sec:red1}

In this section, we shall be using only standard edge cut assignments; that is, edge cut assignments arising from partitions of the form $\{ V(H_i): i \in I\}$, where the $H_i$, for $i \in I$, are the connected components of $H \sm F$, and $F$ is a minimal edge cut in a hypergraph $H$.

\begin{lem}\label{lem:G^alpha-conn}
Let $H$ be a connected hypergraph with a minimal edge cut $F$, and let $H_i$, for $i \in I$, be the connected components of $H \sm F$. Furthermore, let $\alpha: F \to I^{[2]}$ be a standard edge cut assignment.
\begin{enumerate}[(i)]
\item If $G'$ is a connected component of $G^{\alpha}$, then $H^{\alpha}[ \bigcup_{i \in V(G')} V(H_i)]$ is a connected component of $H^{\alpha}$.
\item If $H'$ is a connected component of $H^{\alpha}$, then $V(H')=\bigcup_{i \in J} V(H_i)$ for some $J \subseteq I$, and $G^{\alpha}[J]$ is a connected component of $G^{\alpha}$.
\end{enumerate}
\end{lem}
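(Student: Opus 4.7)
The plan is to establish a tight correspondence between connected components of $H^{\alpha}$ and those of $G^{\alpha}$ via the natural projection that sends each $v \in V(H_i)$ to the vertex $i \in V(G^{\alpha})$. The key preliminary observation (which uses that $\alpha$ is \emph{standard}, so each block $V_i$ is exactly $V(H_i)$ for a connected component $H_i$ of $H \sm F$) is that every edge of $H^{\alpha}$ respects the partition $\{V(H_i): i \in I\}$: if $e \in E(H) - F$, then $e \subseteq V(H_k)$ for a unique $k \in I$, since the $H_k$'s are the connected components of $H \sm F$; and if $f \in F$ with $\alpha(f) = ij$, then $f^{\alpha} \subseteq V(H_i) \cup V(H_j)$, with $f^{\alpha} \cap V(H_i) \ne \emptyset$ and (when $i \ne j$) $f^{\alpha} \cap V(H_j) \ne \emptyset$. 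Thus each edge of $H^{\alpha}$ ``projects'' to either a single vertex of $G^{\alpha}$ or to a single edge $\alpha(f) \in E(G^{\alpha})$.

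For (i), I would let $G'$ be a connected component of $G^{\alpha}$ and set $W = \bigcup_{i \in V(G')} V(H_i)$. The preliminary observation implies that no edge of $H^{\alpha}$ meets both $W$ and $V(H^{\alpha}) - W$: an edge $e \subseteq V(H_k)$ lies wholly inside or wholly outside $W$, and an edge $f^{\alpha}$ with $\alpha(f) = ij$ lies inside $V(H_i) \cup V(H_j)$ where $i,j$ are either both in $V(G')$ or neither. Hence $H^{\alpha}[W]$ inherits precisely the edges of $H^{\alpha}$ contained in $W$. To show $H^{\alpha}[W]$ is connected, I would note that each $H_i$ with $i \in V(G')$ survives as a connected subhypergraph of $H^{\alpha}[W]$ (its edges are unchanged by $\alpha$), and for each edge $ij \in E(G')$ the corresponding hyperedge $f^{\alpha}$ joins $V(H_i)$ to $V(H_j)$; connectivity of $G'$ therefore lifts to connectivity of $H^{\alpha}[W]$. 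Since no edge of $H^{\alpha}$ crosses out of $W$, the subhypergraph $H^{\alpha}[W]$ is a (maximal connected) component.

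For (ii), let $H'$ be a connected component of $H^{\alpha}$. Each $H_i$ embeds as a connected subhypergraph of $H^{\alpha}$, so either $V(H_i) \subseteq V(H')$ or $V(H_i) \cap V(H') = \emptyset$; setting $J = \{i \in I: V(H_i) \subseteq V(H')\}$ yields $V(H') = \bigcup_{i \in J} V(H_i)$. For connectivity of $G^{\alpha}[J]$, a path in $H'$ between vertices of $V(H_i)$ and $V(H_j)$, projected by collapsing each $V(H_k)$ to $k$ and reading off $\alpha(f)$ whenever an $f^{\alpha}$ is crossed, produces an $(i,j)$-walk that uses only vertices in $J$ and edges in $E(G^{\alpha}[J])$. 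Conversely, any edge $\alpha(f) = ij \in E(G^{\alpha})$ with $i \in J$ and $j \notin J$ would force $f^{\alpha}$ to join a vertex of $V(H')$ to a vertex outside, contradicting maximality of $H'$. Hence $G^{\alpha}[J]$ is connected and contains every $G^{\alpha}$-edge incident with $J$, so it is a connected component of $G^{\alpha}$.

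The main obstacle is purely bookkeeping: verifying that the ``projection'' of a walk in $H^{\alpha}$ (which may contain long runs inside a single $V(H_k)$ and can cross an edge $f^{\alpha}$ with both endpoints in the same block when $\alpha(f) = ii$) really collapses to a walk in $G^{\alpha}[J]$, and that the induced-subhypergraph conventions from Section~\ref{sec:prelim} yield the description of $H^{\alpha}[W]$ used above. The loop case $\alpha(f) = ii$ is harmless for (ii), since such edges never contribute to crossings between distinct blocks.
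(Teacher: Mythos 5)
Your proposal is correct and follows essentially the same route as the paper: edges of $H^{\alpha}$ respect the partition $\{V(H_i): i\in I\}$, connectivity of a component $G'$ of $G^{\alpha}$ lifts to connectivity of $H^{\alpha}[\bigcup_{i\in V(G')}V(H_i)]$ with maximality from the absence of crossing edges, and in (ii) walks in $H'$ project to walks in $G^{\alpha}[J]$. The only cosmetic difference is that you finish (ii) by arguing maximality of $G^{\alpha}[J]$ directly (no edge $\alpha(f)=ij$ with $i\in J$, $j\notin J$), whereas the paper deduces it by appealing to part (i); both are fine.
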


\newpage

\begin{proof}
\begin{enumerate}[(i)]
\item Let $G'$ be a connected component of $G^{\alpha}$, let $J=V(G')$ and $H'=H^{\alpha}[ \bigcup_{i \in J} V(H_i)]$.

    Take any $i,j \in J$ such that $i \sim_{G^{\alpha}} j$. Then there exists $f \in F$ such that $\alpha(f)=ij$ and $f^{\alpha}=f \cap (V(H_i) \cup V(H_j))$. Since $f$ intersects both $H_i$ and $H_j$, it follows that $H^{\alpha}[V(H_i) \cup V(H_j)]$ is connected. Therefore $H'$ is connected.

    Moreover, if there exist $i \in J$, $j \in I-J$, $u \in V(H_i)$, and $v \in V(H_j)$ such that $u \sim_{H^{\alpha}} v$, then for some $f \in F$ we have that $\alpha(f)=ij$, and hence $i \sim_{G^{\alpha}} j$, a contradiction.

    We conclude that $H'$ is a connected component of $H^{\alpha}$.

\item Let $H'$ be a connected component of $H^{\alpha}$. Since $H^{\alpha}=\left(\bigcup_{i \in I} H_i \right)+ \{ f^{\alpha}: f \in F \}$, there exists $J \subseteq I$ such that $V(H')=\bigcup_{i \in J} V(H_i)$.

    Take any $i,j \in J$. Then for all $u \in V(H_i)$ and $v \in V(H_j)$ we have that $u$ and $v$ are connected in $H^{\alpha}$. It is then easy to see that $i$ and $j$ are connected in $G^{\alpha}$. Hence $G^{\alpha}[J]$ is connected.

    It then follows from (i) that $G^{\alpha}[J]$ is a connected component of $G^{\alpha}$.
\end{enumerate}
\end{proof}

We are now ready to prove our first reduction theorem, Theorem~\ref{thm:EF-J}, which states that a hypergraph $H$ with a minimal edge cut $F$ admits an Euler family if and only if certain subhypergraphs of $H$ obtained from the connected components of $H \sm F$ admit Euler families. The analogous result for Euler tours follows in Theorem~\ref{thm:ET-J}.

\begin{thm}\label{thm:EF-J}
Let $H$ be a connected hypergraph with a minimal edge cut $F$, and let $H_i$, for $i \in I$, be the connected components of $H \sm F$.

Then $H$ has a (spanning) Euler family if and only if there exists $J \subseteq I$ with $1 \le |J| \le |F|$ such that
\begin{enumerate}[(i)]
\item $H\left[ \bigcup_{j \in J} V(H_j)\right]$ has a non-empty (spanning) Euler family, and
\item $H_i$ has a (spanning) Euler family for all $i \in I-J$.
\end{enumerate}
\end{thm}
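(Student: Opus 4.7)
My plan is to derive Theorem~\ref{thm:EF-J} from Theorem~\ref{thm:H^alpha} by analyzing the structure of the associated multigraph $G^\alpha$ for a suitable standard edge cut assignment. For the forward direction, let $\F$ be a (spanning) Euler family of $H$. By Lemma~\ref{lem:H^alpha}(i), there is a standard edge cut assignment $\alpha$ under which $H^\alpha$ has a (spanning) Euler family, and then Lemma~\ref{lem:HG^alpha} supplies an Euler family of $G^\alpha$ as well. Since $G^\alpha$ is an ordinary multigraph (possibly with loops), this Euler family forces every vertex of positive degree to have even degree at least $2$, so every non-trivial connected component of $G^\alpha$ on $k$ vertices has at least $k$ edges. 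Take $J \subseteq I$ to be the union of the vertex sets of the non-trivial components of $G^\alpha$: then $J \ne \emptyset$ (since $F \ne \emptyset$ forces a non-trivial component), and summing ``$k \le $ number of edges'' over these components yields $|J| \le |E(G^\alpha)| = |F|$.

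By Lemma~\ref{lem:G^alpha-conn}, the connected components of $H^\alpha$ are the hypergraphs $H^\alpha[\bigcup_{j \in J_s} V(H_j)]$, one per non-trivial component $G_s$ of $G^\alpha$ with vertex set $J_s \subseteq J$, together with $H_i$ for each isolated $i \in I - J$. Lemma~\ref{lem:decomposingfamilies}(ii) equips each with a (spanning) Euler family, giving (ii) at once. For (i), I would combine these families over the non-trivial components (using Lemma~\ref{lem:decomposingfamilies}(i)) into a (spanning) Euler family of $H^\alpha[\bigcup_{j \in J} V(H_j)]$, then apply Lemma~\ref{lem:subhypergraph}(i) with the bijection $\phi$ that is the identity on edges internal to the $H_j$ and sends each $f^\alpha$ to $f \cap \bigcup_{j \in J} V(H_j)$. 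The outcome is a non-empty (spanning) Euler family of $H[\bigcup_{j \in J} V(H_j)]$.

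The converse is the reverse construction. Given families $\F_J$ of (i) and $\F_i$ of (ii), Lemma~\ref{lem:subhypergraph}(i) applied with the inverse bijection (identity on edges internal to the $H_j$, and $f \cap \bigcup_{j \in J} V(H_j) \mapsto f$) lifts $\F_J$ to a family of closed trails in the subhypergraph of $H$ with edge set $\bigcup_{j \in J} E(H_j) \cup F$. Adjoining the $\F_i$ for $i \in I - J$ yields a family covering every edge of $H$. Edge-disjointness is immediate; anchor-disjointness holds because the lifted $\F_J$ has anchors in $\bigcup_{j \in J} V(H_j)$ while each $\F_i$ has anchors in the pairwise disjoint set $V(H_i)$; and the spanning case follows because the union of all anchor sets covers $V(H)$.

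The main obstacle is the bound $|J| \le |F|$. Taking $J$ to be the vertex set of a single non-trivial component of $G^\alpha$ would violate (ii) whenever $G^\alpha$ has more than one non-trivial component, so $J$ must be the union of \emph{all} such vertex sets; the bound then survives precisely because $G^\alpha$ admits an Euler family, which rules out tree components (on $k$ vertices with only $k-1$ edges) that would otherwise push $|J|$ above $|F|$.
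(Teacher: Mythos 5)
Your proof is correct and follows essentially the same route as the paper's: pass to $H^{\alpha}$ and $G^{\alpha}$ via Lemma~\ref{lem:H^alpha}(i) and Lemma~\ref{lem:HG^alpha}, get $1 \le |J| \le |F|$ from the even-degree count in $G^{\alpha}$, deduce (i) and (ii) from Lemma~\ref{lem:G^alpha-conn} together with Lemmas~\ref{lem:decomposingfamilies} and \ref{lem:subhypergraph}(i), and run the same two lemmas in reverse (union of families plus the edge bijection $f \cap V' \mapsto f$) for sufficiency. Only be careful that your ``non-trivial'' components of $G^{\alpha}$ must mean ``non-empty'' ones (components containing at least one edge), so that a single vertex carrying loops is placed in $J$ rather than in $I-J$ --- which is what your later reference to isolated vertices indicates you intend.
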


\begin{proof}
Assume $H$ has an Euler family $\F$, and let $\alpha: F \to I^{[2]}$ be the edge cut assignment defined by $\alpha(f)=ij$ if the edge $f \in F$ is traversed by $\F$ via a vertex in $H_i$ and a vertex in $H_j$. Then by Lemma~\ref{lem:H^alpha}(i), the associated hypergraph $H^{\alpha}$ has an Euler family as well.  Let $G^{\alpha}$ be the associated multigraph, and $J$ the union of the vertex sets of all non-empty connected components of $G^{\alpha}$. Since $H$ is connected, we have $|F| \ge 1$, and hence $|J| \ge 1$. Since $H^{\alpha}$ has an Euler family, by Lemma~\ref{lem:HG^alpha}, so does $G^{\alpha}$. Hence
$G^{\alpha}[J]$ is an even graph with $|J|$ vertices, $|F|$ edges, and minimum degree at least 2; it follows that $|J| \le |F|$.

To show (i) and (ii), let $V'=\bigcup_{j \in J} V(H_j)$. By Lemma~\ref{lem:G^alpha-conn}(i), we have that  $H^{\alpha}[V']$ is a union of connected components of $H^{\alpha}$, and $H_i$, for each $i \in I-J$, is a connected component of $H^{\alpha}$. Since $H^{\alpha}$ has an Euler family, it follows from Lemma~\ref{lem:decomposingfamilies} that $H^{\alpha}[V']$ has an Euler family, as does $H_i$, for each $i \in I-J$.

It remains to show that $H[V']$ has a Euler family. Define a bijection
$\phi: E\left(H^{\alpha}[V']\right) \to E\left(H[ V']\right)$ by $\phi(e^{\alpha})= e \cap V'$. Then $e^{\alpha} \subseteq \phi(e^{\alpha})$ for all $e^{\alpha} \in E\left(H^{\alpha}[V']\right)$, so by Lemma~\ref{lem:subhypergraph}(i), since $H^{\alpha}[V']$ has an Euler family, so does $H[V']$. Moreover, since $H[V']$ is non-empty, this Euler family is non-empty.

Conversely, assume that there exists $J \subseteq I$ with $1 \le |J| \le |F|$ such that (i) and (ii) hold. Let $V'=\bigcup_{j \in J} V(H_j)$ and $H'=H[V'] \cup \bigcup_{i \in I-J} H_i$. Then by Lemma~\ref{lem:decomposingfamilies}(i), the hypergraph $H'$ admits an Euler family.

Observe that $E(H')=\bigcup_{i \in I}  E(H_i) \cup \{ f \cap V': f \in F \}$. Define a bijection $\phi: E(H') \to E(H)$ by $\phi(f \cap V')=f$ for all $f \in F$, and $\phi(e)=e$ for all $e \in \bigcup_{i \in I}  E(H_i)$. Hence $e \subseteq \phi(e)$ for all $e \in E(H')$, and since $H'$ has an Euler family, by Lemma~\ref{lem:subhypergraph}(i), the hyperhgraph $H$ has an Euler family as well.

It is easy to see that if the Euler family $\F$ of $H$ is spanning, then so are the resulting Euler families of $H[V']$ and $H_i$, for all $i \in I-J$, and vice-versa.
\end{proof}

\begin{thm}\label{thm:ET-J}
Let $H$ be a connected hypergraph with a minimal edge cut $F$, and let $H_i$, for $i \in I$, be the connected components of $H \sm F$.

Then $H$ has an Euler tour if and only if there exists $J \subseteq I$ with $1 \le |J| \le |F|$ such that
\begin{enumerate}[(i)]
\item $H\left[ \bigcup_{j \in J} V(H_j)\right]$ has an Euler tour, and
\item $H_i$ is empty for all $i \not\in J$.
\end{enumerate}
\end{thm}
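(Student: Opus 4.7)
The plan is to adapt the argument of Theorem~\ref{thm:EF-J} to the Euler-tour setting, using Theorem~\ref{thm:H^alpha}(ii) in place of (i). The critical change --- and the only delicate point --- is that since an Euler tour is a single closed trail, it must live in one connected component of $H^\alpha$; this forces each $H_i$ outside $J$ to be \emph{empty}, not merely to admit an Euler family.

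For necessity, I would begin with an Euler tour of $H$ and apply Lemma~\ref{lem:H^alpha}(i) to obtain an edge cut assignment $\alpha$ such that $H^\alpha$ has an Euler tour. This tour, being a single closed trail, confines all edges of $H^\alpha$ to one connected component, so by Lemma~\ref{lem:G^alpha-conn}(ii) the unique non-empty component of $H^\alpha$ has vertex set $V' = \bigcup_{j \in J} V(H_j)$ for some $J \subseteq I$, with $G^\alpha[J]$ a connected component of $G^\alpha$. To establish (ii), observe that every other connected component of $G^\alpha$ must be an isolated vertex $\{i\}$ with $H_i$ empty, for otherwise Lemma~\ref{lem:G^alpha-conn}(i) would produce a second non-empty connected component of $H^\alpha$. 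For (i), the assignment $\phi: e^\alpha \mapsto e \cap V'$ defines a bijection $E(H^\alpha[V']) \to E(H[V'])$ satisfying $e^\alpha \subseteq e \cap V'$ (since whenever $\alpha(e) = jk$ we have $V(H_j) \cup V(H_k) \subseteq V'$), so Lemma~\ref{lem:subhypergraph}(i) transfers the Euler tour of $H^\alpha[V']$ to one of $H[V']$. Finally, Lemma~\ref{lem:HG^alpha} yields an Euler tour of $G^\alpha$; since all edges of $G^\alpha$ lie in $G^\alpha[J]$, this is in fact an Euler tour of $G^\alpha[J]$, which is then a connected even multigraph on $|J|$ vertices with $|F|$ edges and minimum degree at least $2$, so $2|F| \ge 2|J|$ gives $|J| \le |F|$.

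For sufficiency, hypothesis (ii) together with Lemma~\ref{lem:intersects} (each $f \in F$ meets every connected component of $H \sm F$, and hence meets $V'$) yields $E(H) = \bigcup_{j \in J} E(H_j) \cup F$ and $E(H[V']) = \bigcup_{j \in J} E(H_j) \cup \mset{f \cap V': f \in F}$. Defining $\phi: E(H[V']) \to E(H)$ as the identity on $\bigcup_{j \in J} E(H_j)$ and by $f \cap V' \mapsto f$ on the remaining edges yields a bijection with $e \subseteq \phi(e)$, and Lemma~\ref{lem:subhypergraph}(i) converts the Euler tour of $H[V']$ --- viewed as an Euler family of cardinality one --- into an Euler family of cardinality one of $H$, i.e., the desired Euler tour.
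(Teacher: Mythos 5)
Your proof is correct and follows essentially the same route as the paper's: the edge cut assignment induced by the Euler tour, Lemmas~\ref{lem:H^alpha}, \ref{lem:HG^alpha} and \ref{lem:G^alpha-conn}, the degree count in $G^{\alpha}[J]$ for $|J|\le |F|$, and transfers via Lemma~\ref{lem:subhypergraph}(i). The only differences are cosmetic --- you locate $J$ through the unique non-empty component of $H^{\alpha}$ rather than of $G^{\alpha}$, spell out the transfer from $H^{\alpha}[V']$ to $H[V']$ that the paper leaves implicit, and in the sufficiency direction apply Lemma~\ref{lem:subhypergraph}(i) directly to $H[V']$ instead of passing through the auxiliary hypergraph $H'$.
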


\begin{proof}
Assume $H$ has an Euler tour $T$, and let $\alpha: F \to I^{[2]}$ be the edge cut assignment defined by $\alpha(f)=ij$ if the edge $f \in F$ is traversed by $T$ via a vertex in $H_i$ and a vertex in $H_j$.
Let $G^{\alpha}$ be the associated multigraph. By Lemmas~\ref{lem:H^alpha}(i) and \ref{lem:HG^alpha}, respectively, the hypergraph $H^{\alpha}$ and multigraph $G^{\alpha}$ both have Euler tours. Hence $G^{\alpha}$ has a unique non-empty connected component; let $J$ be its vertex set. As in the proof of Theorem~\ref{thm:EF-J}, it can be shown that $1 \le |J| \le |F|$.

Let $V'=\bigcup_{j \in J} V(H_j)$. By Lemma~\ref{lem:G^alpha-conn}(i), we have that  $H^{\alpha}[V']$ is a connected component of $H^{\alpha}$, as is $H_i$, for each $i \in I-J$. Since $H^{\alpha}$ has an Euler tour and $H^{\alpha}[V']$ is not empty, it follows that $H^{\alpha}[V']$ has an Euler tour, and $H_i$, for each $i \in I-J$, is empty.

Conversely, assume that there exists $J \subseteq I$ with $1 \le |J| \le |F|$ such that (i) and (ii) hold. Let $V'=\bigcup_{j \in J} V(H_j)$, and $H'=H[V'] \cup \bigcup_{i \in I-J} H_i$. Similarly to the proof of Theorem~\ref{thm:EF-J}, we observe that since $H[V']$ has an Euler tour and $H_i$, for each $i \in I-J$, is empty, Lemma~\ref{lem:decomposingfamilies}(i) shows that $H'$ has an Euler tour as well. By Lemma~\ref{lem:subhypergraph}(i), it follows that $H$ has an Euler tour.
\end{proof}

We observe that Theorem~\ref{thm:ET-J} cannot be extended to spanning Euler tours in any meaningful way.
The following immediate consequence of
Theorems~\ref{thm:EF-J} and \ref{thm:ET-J} gives more transparent necessary and sufficient conditions for the existence of an Euler family and Euler tour for a hypergraph with a cut edge.

\begin{cor}\label{cor:cut-edge}
Let $H$ be a connected hypergraph with a cut edge $f$.  Let $H_i$,  for $i \in I$, be the connected components of $H \sm f$.  Then the following hold.
\begin{enumerate}[(i)]
\item $H$ has a (spanning) Euler family if and only if there exists $j \in I$ such that
	\begin{itemize}
	\item $H[V(H_j)]$ has a non-empty (spanning) Euler family, and
	\item $H_i$ has a (spanning) Euler family for all $i \in I-\{ j \}$.
	\end{itemize}
\item $H$ has an Euler tour if and only if there exists $j \in I$ such that
	\begin{itemize}
	\item $H[V(H_j)]$ has an Euler tour, and
	\item $H_i$ is empty for all $i \in I-\{ j \}$.
	\end{itemize}
\item $H$ has no spanning Euler tour.
\item If $H$ has no vertex of degree 1, then $H$ has no Euler tour.
\end{enumerate}
\end{cor}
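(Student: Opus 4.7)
The plan is to derive all four parts by specializing Theorems~\ref{thm:EF-J} and \ref{thm:ET-J} to the minimal edge cut $F=\{f\}$: the set $\{f\}$ is an edge cut because $f$ is a cut edge, and it is vacuously minimal since its only proper subset is empty. The constraint $1 \le |J| \le |F| = 1$ in those theorems forces $J=\{j\}$ for some $j \in I$, and substituting back directly yields statements (i) and (ii).

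For (iv), assume for contradiction that $H$ admits an Euler tour. Then by (ii) there is some $j \in I$ such that $H_i$ has no edges for every $i \in I-\{j\}$. Since $f$ is a cut edge, $c(H \sm f) \ge 2$, so we may choose $i_0 \in I-\{j\}$ and any $v \in V(H_{i_0})$. Because $H_{i_0}$ is edgeless, $v$ is incident with no edge of $H \sm f$, and since $H$ is connected, $v$ must lie in $f$. Hence $\deg_H(v)=1$, contradicting the hypothesis.

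For (iii), suppose for contradiction that $T = v_0 e_1 v_1 \ldots v_{k-1} e_k v_0$ is a spanning Euler tour of $H$, and let $s$ be the unique index with $e_s=f$. By (ii), every edge other than $f$ belongs to $H_j$, so for each $i \ne s$ both incident vertices of the traversal of $e_i$ lie in $V(H_j)$. Reading indices cyclically, the edges $e_{s-1}$ and $e_{s+1}$ (both distinct from $f$) therefore force $v_{s-1}, v_s \in V(H_j)$, so the two anchors of $f$ traversed by $T$ both lie in $V(H_j)$, and no vertex of $f-V(H_j)$ is ever an anchor of $T$. On the other hand, by the argument used for (iv), every vertex in $\bigcup_{i \ne j} V(H_i)$ lies in $f-V(H_j)$, and since $|I| \ge 2$ there is at least one such vertex, contradicting that $T$ is spanning. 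The main obstacle is precisely this anchor-tracking step: one must observe that deleting the single traversal of $f$ from $T$ leaves a trail lying entirely in $H_j$, which pins both anchors of $f$ into $V(H_j)$.
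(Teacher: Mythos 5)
Your proposal is correct and follows the route the paper intends: the paper states the corollary as an immediate consequence of Theorems~\ref{thm:EF-J} and~\ref{thm:ET-J} with $F=\{f\}$ (so $|J|=1$), which is exactly your derivation of (i) and (ii), and your short degree/anchor arguments for (iii) and (iv) correctly fill in the details the paper leaves implicit.
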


\bigskip

In Algorithm~\ref{alg:EF} below, we shall now put to use the results of Lemma~\ref{lem:HG^alpha} and Theorem~\ref{thm:H^alpha} (applied to standard edge cut assignments), \textcolor{black}{as well as Corollary~\ref{cor:2-edges} and Lemma~\ref{lem:min-cut-large-edge}}, to determine whether a given hypergraph has an Euler family.

\begin{alg}\label{alg:EF} Does a connected hypergraph $H$ admit an Euler family? {\rm
\begin{enumerate}[(1)]
\item Sequentially delete vertices of degree at most 1 from $H$ until no such vertices remain or $H$ is trivial.
If at any step $H$ has an edge of cardinality less than 2, then $H$ has no Euler family --- exit.

\color{black}

\item Let $H^{[2]}$ be the graph obtained from $H$ by deleting all edges of cardinality greater than 2, and $G$ a maximal even subgraph of $H^{[2]}$. Delete all edges of $G$ from $H$.

\item Repeat Steps (1) and (2) as needed.

\item If $H$ is a graph, then it has an Euler family if and only if it is even --- exit.

\color{black}
\item Find a minimal edge cut $F$ containing an edge of cardinality at least 3.

\color{black}

\item Let $H_i$, for $i \in I$, be the connected components of $H \sm F$.
\item For all edge cut assignments $\alpha: F \to I^{[2]}$:
\begin{enumerate}
\item If $G^{\alpha}$ has no Euler family, then $H^{\alpha}$ has no Euler family --- discard this $\alpha$.
\item For each connected component $H'$ of $H^{\alpha}$: \\ if $H'$ has no Euler family {\em (recursive call)}, discard this $\alpha$.
\item If every connected component of $H^{\alpha}$ has an Euler family, then $H$ has an Euler family --- exit.
\end{enumerate}
\item $H$ has no Euler family --- exit.
\end{enumerate}
}
\end{alg}

Similarly, in Algorithm~\ref{alg:ET} below, we use Theorem~\ref{thm:ET-J} and Corollary~\ref{cor:cut-edge}, in addition to \textcolor{black}{Corollary~\ref{cor:2-edges}, Lemmas~\ref{lem:min-cut-large-edge}} and
\ref{lem:HG^alpha}, and Theorem~\ref{thm:H^alpha},  to determine whether a given hypergraph has an Euler tour.

\begin{alg}\label{alg:ET} Does a connected hypergraph $H$ admit an Euler tour?
{\rm
\begin{enumerate}[(1)]
\item Sequentially delete vertices of degree at most 1 from $H$ until no such vertices remain or $H$ is trivial.
If at any step $H$ has an edge of cardinality less than 2, then $H$ is not eulerian --- exit.

\color{black}

\item Let $H^{[2]}$ be the graph obtained from $H$ by deleting all edges of cardinality greater than 2, and $G$ a maximal even subgraph of $H^{[2]}$. For each non-empty connected component $G_1$ of $G$, replace $E(G_1)$ in $H$ with the edge set of any cycle on $V(G_1)$.

\item Repeat Steps (1) and (2) as needed.

\item If $H$ is a graph, then it is eulerian if and only if it is even and has at most one non-trivial connected component --- exit.

\color{black}
\item Find a minimal edge cut $F$ containing an edge of cardinality at least 3. If $|F|=1$, then $H$ is not eulerian --- exit.

\color{black}

\item Let $H_i$, for $i \in I$, be the connected components of $H \sm F$.
\item Let $J = \{ i \in I: H_i \mbox{ is non-empty} \}$. If $|F|<|J|$, then $H$ is not eulerian --- exit.
\item For all edge cut assignments $\alpha: F \to I^{[2]}$:
\begin{enumerate}
\item If $G^{\alpha}$ is not eulerian, then $H^{\alpha}$ is not eulerian --- discard this $\alpha$.
\item If $H^{\alpha}$ has at least 2 non-empty connected components, then $H^{\alpha}$ is not eulerian --- discard this $\alpha$.
\item Let $H'$ be the unique non-empty connected component of $H^{\alpha}$. If $H'$ is eulerian {\em (recursive call)}, then $H$ is eulerian --- exit. Otherwise, discard this $\alpha$.
\end{enumerate}
\item $H$ is not eulerian --- exit.
\end{enumerate}
}
\end{alg}

\begin{figure}[t]
\centerline{\includegraphics[scale=0.8]{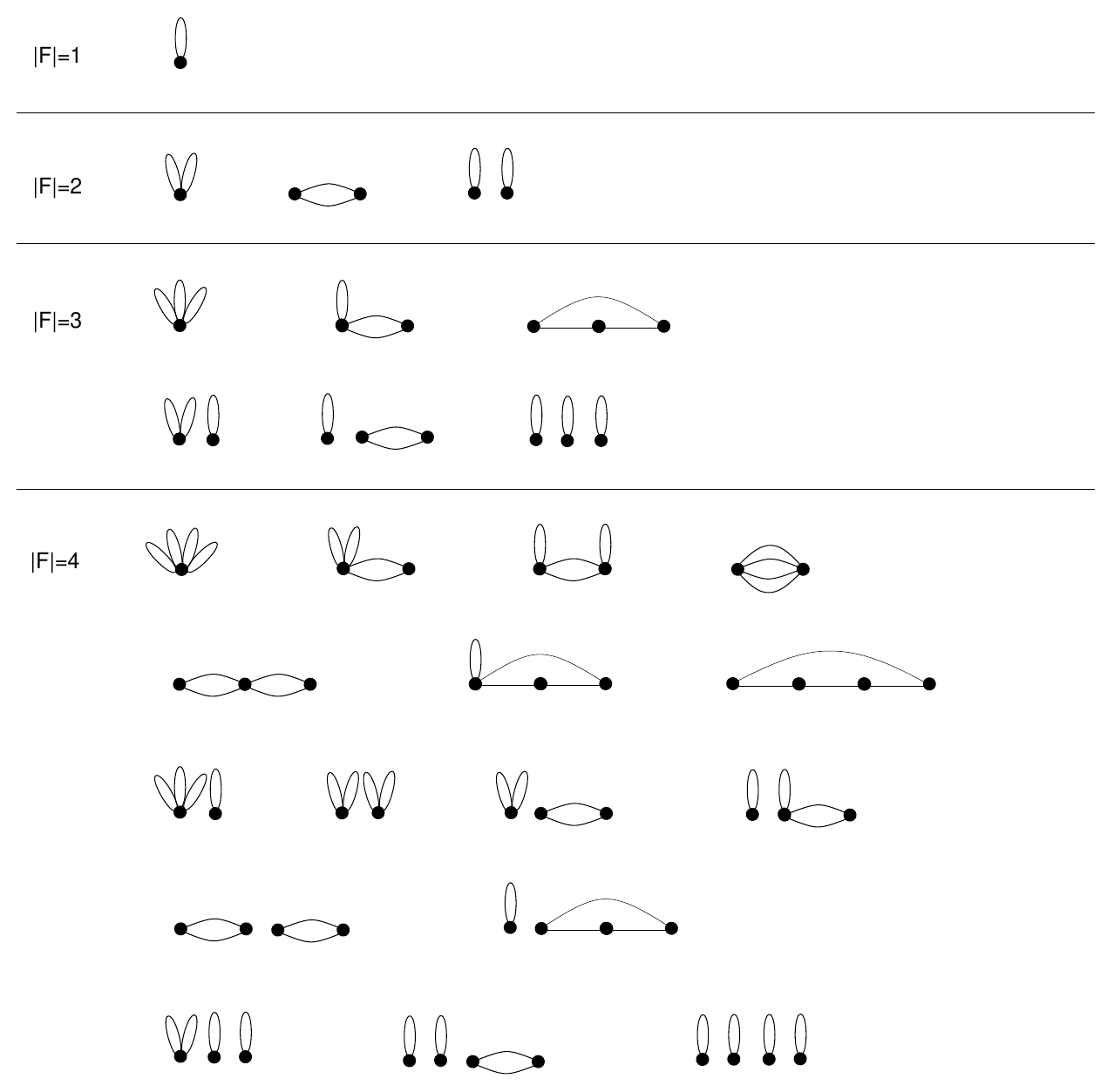}}
\caption{Isomorphism classes of even graphs of small size without isolated vertices.}\label{fig:Fig1}
\end{figure}

\begin{rems}\label{rems}{\rm
\begin{enumerate}[(i)]
\item In both algorithms, the input hypergraph is being modified during the execution of the algorithm. However, at any step, the current hypergraph $H$ admits an Euler family (tour) if and only if the input hypergraph does.

\color{black}
\item As each non-empty proper subset $S$ of $V(H)$ corresponds to an edge cut, minimal edge cuts are easy to construct. If the minimal edge cut obtained in this straightforward way does not contain an edge of cardinality at least 3, then  at Step (5) of both algorithms, the proof of Lemma~\ref{lem:min-cut-large-edge} can be used to satisfy this requirement. Its purpose is to ensure the reduction in size of the hypergraph for the next recursive call, as explained below.

\color{black}
\item \label{rem:reduction} To guarantee that the algorithms terminate, it suffices that at each iteration $|H'|<|H|$. Some adjustments may be needed to satisfy this condition. First, when generating edge cut assignments, priority should be given to $\alpha$ such that $\alpha(f)=i$ for some $i \in I$ --- for as many $f \in F$ as possible in case of Algorithm~\ref{alg:EF}, and for at least one $f \in F$ in case of Algorithm~\ref{alg:ET}. In other words, we want $H^\alpha$ to have as many connected components as possible (while allowing a positive outcome) in the hope that the reduction in size from $H$ to $H'$ is as large as possible.

    However, we may still need to examine $\alpha$ such that $H^\alpha=H$ and hence $H'=H$ and no reduction in size has occurred. In that case, it must be that $|I|=2$ --- that is, $H \sm F$ has just two connected components, $H_1$ and $H_2$ --- and $\alpha(f)=12$ for all $f \in F$. We then choose $f^\ast \in F$ such that $|f| \ge 3$. For all $x_1 \in f^\ast \cap V(H_1)$ and $x_2 \in f^\ast \cap V(H_2)$, we let $e^\ast=\{ x_1,x_2\}$ and $H'=(H \sm f^\ast) + \{ e^\ast\} $. Note that $|H'|<|H|$, and $H$ admits an Euler family (tour) if and only if at least one such hypergraph $H'$ does. If necessary, we then recursively test all such hypergraphs $H'$.

\color{black}

\item \label{rem:min} \textcolor{black}{As long as we can guarantee that the algorithms terminate,} using a minimum edge cut (instead of just a minimal one) would likely be more efficient. An ${\cal O}(p+n^2 \lambda)$ algorithm for finding a minimum edge cut in a hypergraph $H$ with size $p=\sum_{e \in E(H)} |e|$, order $n=|V(H)|$, and cardinality of a minimum edge cut $\lambda$ was described in \cite{CheXu}.

\FloatBarrier

\color{black}

\item \label{rem:n-alphas} The most time-consuming part of both algorithms is likely going to be the sequence of recursive calls to determine whether hypergraphs $H'$ have an Euler family or tour. The smaller the size of the largest of the hypergraphs $H'$, the better; however, this size is impossible to predict. Instead, we suggest minimizing the number of possible edge cut assignments $\alpha$ to be verified. The number of all possible mappings $F \to I^{[2]}$  is ${|I|+1 \choose 2}^{|F|}$, which indeed suggests choosing $F$ to be a minimum edge cut \textcolor{black}{(if possible)}.
    Among minimum edge cuts $F$, should we seek one that also minimizes $|I|$? We do not have a clear answer: on one hand, this approach would minimize the number of mappings $\alpha$ to verify; on the other hand, the expected size of the hypergraphs $H'$ for our recursive calls is inversely proportional to \textcolor{black}{the number of connected components of $H^\alpha$, which can be anywhere between 1 and $|I|$.}  --- We shall comment more on the complexity of Algorithms~\ref{alg:EF} and \ref{alg:ET} in Conclusion.

\item The number of edge cut assignments $\alpha: F \to I^{[2]}$ resulting in an even graph $G^{\alpha}$ is likely much smaller than ${|I|+1 \choose 2}^{|F|}$. To give some idea of this number, we show in Figure~\ref{fig:Fig1}, for $|F| \le 4$, the isomorphism classes of the suitable graphs $G^{\alpha}$ with the isolated vertices removed.

\item Observe that Algorithms~\ref{alg:EF} and \ref{alg:ET} are
formulated to solve decision problems --- does $H$ contain an Euler family (tour)? --- however, since they are based on results that use constructive proofs, they can easily be adapted to construct an Euler family (tour), if it exists.
\end{enumerate}
}
\end{rems}

\color{black}

\begin{figure}[t]
\centerline{\includegraphics[scale=0.8]{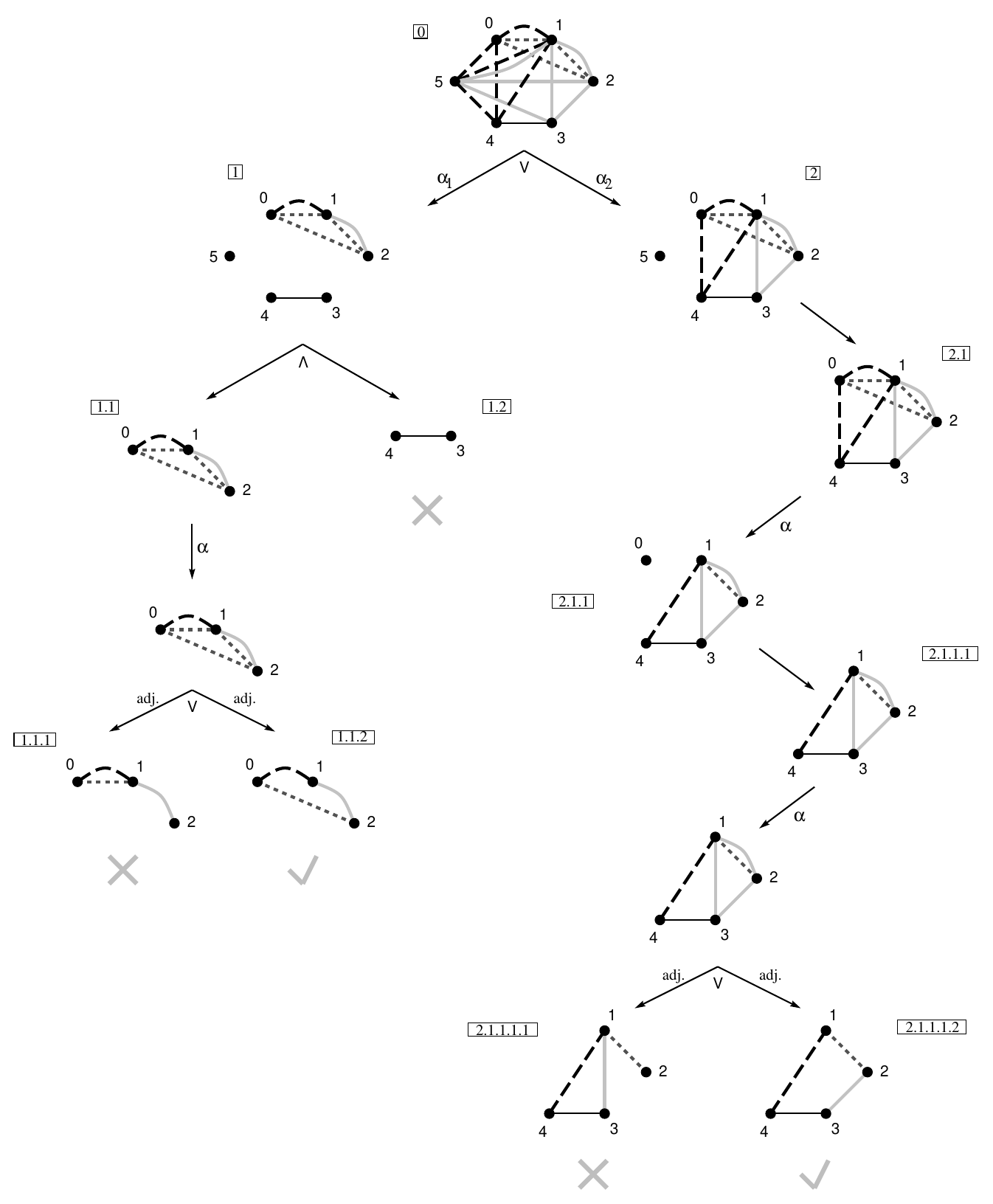}}
\caption{Illustrating Example~\ref{eg:Alg1}.}\label{fig:Fig2}
\end{figure}

\begin{eg}\label{eg:Alg1}{\rm
We shall demonstrate Algorithm~\ref{alg:EF} using the hypergraph $H=(V,E)$ with $V=\ZZ_6$ and $E=\{ 012, 0145,1235,34 \}$. The key hypergraphs in the execution of the algorithm on $H$ are shown in Figure~\ref{fig:Fig2}. In each hypergraph, each edge $e$ is drawn as a complete graph on the vertex set $e$, using a different line style. Symbols $\vee$ and $\wedge$ indicate that both or either, respectively, of the two branches must lead to success, and $\alpha$ indicates that an edge cut assignment has been applied. Finally, {\em adj.} indicates that an adjustment has been applied in order to reduce the size of the hypergraph; see Remarks~\ref{rems}(\ref{rem:reduction}). Note that some minor details are omitted for brevity.

{\em Iteration 0.} The input hypergraph is  $H=\left(\ZZ_6,\{ 012, 0145,1235,34 \} \right)$, and we choose a minimal edge cut $F=\{ 0145,1235 \}$. The hypergraph $H\sm F$ has connected components $H_1, H_2$, and $H_3$, with vertex sets $V(H_1)=\ZZ_3$, $V(H_2)=\{ 3,4 \}$, and $V(H_3)=\{ 5 \}$. We first examine the edge cut assignment $\alpha$ with $\alpha(0145)=\alpha(1235)=1$ (Iteration 1), and then the edge cut assignment $\alpha$ with $\alpha(0145)=\alpha(1235)=12$ (Iteration 2).

{\em Iteration 1.} We have $H^\alpha=\left(\ZZ_6,\{ 01, 012, 12,34 \} \right)$, and we need to test its two non-empty connected components (Iterations 1.1 and 1.2).

\FloatBarrier

{\em Iteration 1.1.} The input hypergraph is  $H=\left(\ZZ_3,\{ 01,012, 12 \} \right)$. We choose a minimal edge cut $F=\{ 01,012 \}$. The hypergraph $H\sm F$ has connected components $H_1$ and $H_2$, with vertex sets $V(H_1)=\{ 0 \}$ and $V(H_2)=\{ 1,2 \}$. The only edge cut assignment $\alpha$ such that $G^\alpha$ is even yields $\alpha(01)=\alpha(012)=12$, which results in $H^\alpha=H$. Following Remarks~\ref{rems}(\ref{rem:reduction}), we replace the edge $012$ first with $01$ (Iteration 1.1.1), and then with $02$ (Iteration 1.1.2).

{\em Iteration 1.1.1.} We have $H=\left(\ZZ_3,\mset{01,01,12} \right)$. In Step (1), we delete vertex 2, which is of degree 1, resulting in an edge of cardinality 1. Hence $H$ admits no Euler family.

{\em Iteration 1.1.2.} We have $H=\left(\ZZ_3,\{ 01,02,12 \} \right)$. This is an even graph, and so it admits an Euler family. Iteration 1.1 is successfully completed.

{\em Iteration 1.2.} The input hypergraph $H=\left(\{ 3,4 \},\{ 34 \} \right)$ is rejected in Step (1). Iteration 1 is completed unsuccessfully.

{\em Iteration 2.} We have $H^\alpha=\left(\ZZ_6,\{ 014, 012, 123,34 \} \right)$. There is a single non-empty connected component to be tested (Iteration 2.1).

{\em Iteration 2.1.} The input hypergraph is  $H=\left(\ZZ_5,\{ 014, 012, 123,34 \} \right)$. We choose a minimal edge cut $F=\{ 012,014 \}$. The hypergraph $H\sm F$ has connected components $H_1$ and $H_2$ with vertex sets $V(H_1)=\{ 0 \}$ and $V(H_2)=\{ 1,2,3,4 \}$. We first test the edge cut assignment $\alpha$ with $\alpha(014)=\alpha(012)=2$ (Iteration 2.1.1).

{\em Iteration 2.1.1.} We have $H^\alpha=\left(\{1,2,3,4\},\{ 12,14,123,34 \} \right)$. There is a single non-empty connected component to be tested (Iteration 2.1.1.1).

{\em Iteration 2.1.1.1.} The input hypergraph is  $H=\left(\{ 1,2,3,4\},\{ 12,14,123,34 \} \right)$. We choose a minimal edge cut $F=\{ 123,14 \}$. The hypergraph $H\sm F$ has components $H_1$ and $H_2$ with vertex sets $V(H_1)=\{ 1,2 \}$ and $V(H_2)=\{ 3,4 \}$. The only admissible edge cut assignment is $\alpha$ with $\alpha(123)=\alpha(14)=12$, which results in $H^\alpha=H$. Following Remarks~\ref{rems}(\ref{rem:reduction}), we replace the edge $123$ first with $13$ (Iteration 2.1.1.1.1), and then with $23$ (Iteration 2.1.1.1.2).

{\em Iteration 2.1.1.1.1.} The input hypergraph   $H=\left(\{ 1,2,3,4\},\{ 12,14,13,34 \} \right)$ gets rejected in Step (1).

{\em Iteration 2.1.1.1.2.} The input hypergraph is  $H=\left(\{ 1,2,3,4\},\{ 12,14,23,34 \} \right)$, which is an even graph. Iterations 2.1.1.1, 2.1.1, 2.1, and 2 are thus completed successfully.

We conclude that the original input hypergraph $H$ admits an Euler family.

Note that Algorithm~\ref{alg:ET} performed on $H$ runs very similarly, except that Iteration 1 is omitted because $H^\alpha$ has more than one non-empty connected component.
}
\phantom{a} \hfill \framebox[2.2mm]

\end{eg}

\color{black}


\section{Reduction Using Collapsed Hypergraphs}\label{sec:red2}

We shall now introduce our second main tool --- the collapsed hypergraph --- which allows for a binary-type of a reduction.

\begin{defn}{\rm
Let $H$ be a hypergraph, and $S$ a subset of its vertex set with $\emptyset \subsetneq S \subsetneq V(H)$. The {\em collapsed hypergraph of $H$ with respect to $S$} is the hypergraph $H \circ S=(V^{\circ}, E^{\circ})$ defined by
$$V^{\circ}=(V-S) \cup \{ u^{\circ} \} \quad \mbox{ and } \quad
E^{\circ}=\mset{e^{\circ}: e \in E(H), |e \cap (V-S)| \ge 1},$$
where
$$e^{\circ}= \left\{
\begin{array}{ll}
e & \mbox{ if } e \cap S = \emptyset \\
(e-S) \cup \{ u^{\circ} \} & \mbox{otherwise}
\end{array}
\right..$$
Here, $u^{\circ}$ is a new vertex, which we call the {\em collapsed vertex} of $H \circ S$.
}
\end{defn}

We are now ready for our second main reduction theorem, Theorem~\ref{thm:EF-collapsed}, which states that a hypergraph $H$ admits an Euler family if and only if some collapsed hypergraphs of $H$ (obtained using an edge cut assignment) admit Euler families.

\begin{thm}\label{thm:EF-collapsed}
Let $H$ be a connected hypergraph with a minimal edge cut $F$, and $\{ V_0, V_1 \}$ a partition of $V(H)$ into unions of the vertex sets of the connected components of $H \sm F$.

Then $H$ admits an Euler family if and only if
there exists an edge cut assignment $\alpha: F \to \ZZ_2^{[2]}$ such that $|\alpha^{-1}(01)|$ is even, and either
\begin{enumerate}[(i)]
\item $\alpha^{-1}(01) = \emptyset$ and $H^{\alpha}[V_i]$ has an Euler family for each $i \in \ZZ_2$, or
\item $\alpha^{-1}(01) \ne \emptyset$, and for each $i \in \ZZ_2$, the collapsed hypergraph $H^{\alpha} \circ V_i$ has an Euler family that traverses the collapsed vertex $u_i^{\circ}$ via each of the edges in $\{ f^{\circ}: f \in \alpha^{-1}(01) \}$.
\end{enumerate}
\end{thm}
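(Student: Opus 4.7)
The plan is to leverage Lemma \ref{lem:H^alpha} and Theorem \ref{thm:H^alpha} so that everything reduces to analyzing Euler families of $H^\alpha$, and then to translate between $H^\alpha$ and the collapsed hypergraphs $H^\alpha \circ V_0$ and $H^\alpha \circ V_1$. The key structural observation is that in $H^\alpha$, the only edges with vertices in both $V_0$ and $V_1$ are the ``crossings'' $\alpha^{-1}(01)$; all other edges lie entirely inside $V_0$ or entirely inside $V_1$.

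For the forward direction, I would start from an Euler family of $H$ and use Lemma \ref{lem:H^alpha}(i) to obtain an edge cut assignment $\alpha$ for which $H^\alpha$ has an Euler family $\F^\alpha$. Lemma \ref{lem:HG^alpha} then gives an Euler family of $G^\alpha$, and since $G^\alpha$ has just two vertices $\{0,1\}$ with $\deg(0)=2|\alpha^{-1}(00)|+|\alpha^{-1}(01)|$, parity forces $|\alpha^{-1}(01)|$ to be even. If $\alpha^{-1}(01)=\emptyset$, then $H^\alpha$ is the disjoint union of $H^\alpha[V_0]$ and $H^\alpha[V_1]$ and Lemma \ref{lem:decomposingfamilies} yields (i). Otherwise, I would construct an Euler family of $H^\alpha\circ V_0$ by collapsing each $T\in\F^\alpha$: replace every $V_0$-vertex by $u_0^\circ$ and delete all edges contained in $V_0$, so that pure $V_0$-trails vanish, pure $V_1$-trails survive unchanged, and each mixed trail becomes a closed walk through $u_0^\circ$. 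Concatenating any two collapsed trails at the shared anchor $u_0^\circ$ restores anchor-disjointness, and the required traversal condition holds because every crossing $f\in\alpha^{-1}(01)$ was used in $\F^\alpha$ with one endpoint in $V_0$, which collapses to $u_0^\circ$. Swapping the roles of $V_0$ and $V_1$ gives the Euler family of $H^\alpha\circ V_1$.

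For the reverse direction, case (i) is straightforward: $\alpha^{-1}(01)=\emptyset$ makes $H^\alpha$ a disjoint union of the two induced subhypergraphs, so Lemma \ref{lem:decomposingfamilies} combined with Lemma \ref{lem:H^alpha}(ii) delivers an Euler family of $H$. In case (ii), let $\F_i^\circ$ be the promised Euler family of $H^\alpha\circ V_i$. I would extract fixed-point-free involutions $\tau_0,\tau_1$ on $\alpha^{-1}(01)$: in any trail of $\F_0^\circ$ using $u_0^\circ$, the $V_1$-walks between consecutive visits to $u_0^\circ$ pair the crossings into $\tau_0$, and analogously $\F_1^\circ$ yields $\tau_1$ via $V_0$-walks. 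The $2$-regular multigraph on $\alpha^{-1}(01)$ whose edges come from $\tau_0$ (``red'') and $\tau_1$ (``blue'') decomposes into alternating cycles. Each alternating cycle $f_1-f_2-\cdots-f_{2\ell}-f_1$ assembles into a closed trail of $H^\alpha$ by alternating crossings with the prescribed $V_1$- and $V_0$-walks. Adjoining the pure $V_1$-trails of $\F_0^\circ$ (those avoiding $u_0^\circ$) and the pure $V_0$-trails of $\F_1^\circ$ (those avoiding $u_1^\circ$) yields an edge-disjoint collection of closed trails that covers every edge of $H^\alpha$; a final round of concatenation at shared anchors restores anchor-disjointness, and Lemma \ref{lem:H^alpha}(ii) lifts this Euler family of $H^\alpha$ to one of $H$.

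The main obstacle will be the reverse direction in case (ii): verifying that the alternating-cycle construction actually produces genuine closed trails of $H^\alpha$ (not merely sequences of vertices and edges) and that the resulting family is both edge-disjoint and exhaustive. The hypothesis that $u_i^\circ$ is traversed via \emph{every} crossing edge $f^\circ$ is exactly what makes $\tau_0$ and $\tau_1$ well-defined involutions on all of $\alpha^{-1}(01)$; without it, some crossing could ``slip past'' $u_i^\circ$ via two $V_{1-i}$-endpoints and ruin the pairing. Bookkeeping the specific $V_0$- and $V_1$-endpoints of each crossing used by $\F_0^\circ$ and $\F_1^\circ$, so that consecutive edges in the stitched trail share a vertex in $H^\alpha$, is the technical heart of the argument.
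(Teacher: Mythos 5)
Your proposal is correct and follows essentially the same route as the paper: reduce everything to $H^{\alpha}$ via Lemma~\ref{lem:H^alpha}, obtain the collapsed Euler families by contracting the $V_i$-segments of the trails of $\F^{\alpha}$ (concatenating at the collapsed vertex), and for sufficiency stitch the $V_0$-trails and $V_1$-trails of the two unique trails through $u_1^{\circ}$ and $u_0^{\circ}$ together along the crossing edges, finishing with a concatenation at shared anchors and Lemma~\ref{lem:H^alpha}(ii). Your red/blue involution and alternating-cycle bookkeeping is just an explicit rendering of the paper's linking via the bijection $\pi$, and deducing the parity of $|\alpha^{-1}(01)|$ from $G^{\alpha}$ rather than by counting crossings directly is an equally valid minor variant.
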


\begin{proof}
Let $\F$ be an Euler family of $H$, and $\alpha: F \to \ZZ_2^{[2]}$ the edge cut assignment with $\alpha(f)=ij$ if and only if the edge $f$ is traversed by $\F$ via a vertex in $V_i$ and a vertex in $V_j$ (where $i=j$ is possible). By Lemma~\ref{lem:H^alpha}(i), the hypergraph $H^{\alpha}$ has an Euler family as well, say $\F^{\alpha}$. Since the only edges of $H^{\alpha}$ that intersect both $V_0$ and $V_1$ are edges of the form $f^{\alpha}$ with $f \in F$ and $\alpha(f)=01$,
it is easy to see that each closed trail in $\F^{\alpha}$ traverses an even number of such edges. Hence $|\alpha^{-1}(01)|$ is indeed even.

Suppose first that $\alpha^{-1}(01) = \emptyset$. Then $H^{\alpha}[V_i]$, for each $i \in \ZZ_2$, is a union of connected components of $H^{\alpha}$, and hence has an Euler family by Lemma~\ref{lem:decomposingfamilies}(ii).

Suppose now that $\alpha^{-1}(01) \ne \emptyset$. By symmetry, it suffices to show that $H^{\alpha} \circ V_1$ has an Euler family that traverses the collapsed vertex $u_1^{\circ}$ via each of the edges in $\{ f^{\circ}: f \in \alpha^{-1}(01) \}$. Let $T$ be a closed trail in the Euler family $\F^{\alpha}$ of $H^{\alpha}$ that traverses an edge $f^{\alpha}$, for some $f \in \alpha^{-1}(01)$. Then $T$ has the form
$$T=v_0^{(0)} f_0^{\alpha} u_0^{(1)} T_0^{(1)} v_0^{(1)} f_0'^{\alpha} u_0^{(0)} T_0^{(0)} v_1^{(0)} \ldots u_{k-1}^{(0)} T_{k-1}^{(0)} v_0^{(0)}$$
for some vertices $v_0^{(0)}, u_0^{(0)},v_1^{(0)}, u_1^{(0)},\ldots, u_{k-1}^{(0)} \in V_0$ and $u_0^{(1)},v_0^{(1)}, u_1^{(1)},v_1^{(1)}, \ldots, v_{k-1}^{(1)} \in V_1$; for some edges $f_0, f_0', f_1, f_1', \ldots, f_{k-1}' \in \alpha^{-1}(01)$; and for $i \in \ZZ_k$, for some $(u_i^{(0)},v_{i+1}^{(0)})$-trails $T_i^{(0)}$ in $H^{\alpha}[V_0]$ and $(u_i^{(1)},v_i^{(1)})$-trails $T_i^{(1)}$ in $H^{\alpha}[V_1]$.
Let $T^{\circ}$ be the sequence obtained from $T$ by replacing each subsequence of the form $v_i^{(0)} f_i^{\alpha} u_i^{(1)} T_i^{(1)} v_i^{(1)} f_i'^{\alpha} u_i^{(0)}$ with $v_i^{(0)} f_i^{\circ} u_1^{\circ}  f_i'^{\circ} u_i^{(0)}$. Then $T^{\circ}$ is a closed trail in $H^{\alpha} \circ V_1$, and it traverses the collapsed vertex  $u_1^{\circ}$ via each of the edges of the form $f^{\circ}$ for $f \in \alpha^{-1}(01)$ such that $T$ traverses $f^{\alpha}$. If we additionally define $T^{\circ}=T$ for all closed trails $T \in \F^{\alpha}$ that do not traverse any edges of the form $f^{\alpha}$, for some $f \in \alpha^{-1}(01)$, then it is clear that $\{ T^{\circ}: T \in \F^{\alpha} \}$ is a family of closed trails in $H^{\alpha} \circ V_1$ that jointly traverse each edge exactly once, and also traverse the collapsed vertex $u_1^{\circ}$ via each of the edges in $\{ f^{\circ}: f \in \alpha^{-1}(01) \}$.
To obtain an Euler family, we just need to concatenate all those closed trails in this family that traverse the collapsed vertex $u_1^{\circ}$.

To prove the converse, let $\alpha: F \to \ZZ_2^{[2]}$ be an edge cut assignment such that $|\alpha^{-1}(01)|$ is even, and either (i) or (ii) holds.

Suppose first that (i) holds. Since $\alpha^{-1}(01) = \emptyset$, for each $i \in \ZZ_2$, the hypergraph $H^{\alpha}[V_i]$ is a union of connected components of $H^{\alpha}$, and by Lemma~\ref{lem:decomposingfamilies}, since each of $H^{\alpha}[V_0]$ and $H^{\alpha}[V_1]$ admits an Euler family, so does $H^{\alpha}$. By Lemma~\ref{lem:H^alpha}(ii), it follows that $H$ admits an Euler family.

Suppose now that (ii) holds and $|\alpha^{-1}(01)|=2k$. For each $i \in \ZZ_2$, let $\alpha^{-1}(01)=\{ f_0^{(i)},  f_1^{(i)}, \ldots, f_{2k-1}^{(i)} \}$. By assumption, the collapsed hypergraph $H^{\alpha} \circ V_{1-i}$ admits an Euler family $\F^{(i)}$ that traverses the collapsed vertex $u_{1-i}^{\circ}$ via each of the edges $f^{\circ}$, for $f \in \alpha^{-1}(01)$. Let $T^{(i)}$ be the unique closed trail in $\F^{(i)}$ that traverses $u_{1-i}^{\circ}$. Then, without loss of generality, $T^{(i)}$ must be of the form
$$T^{(i)}=v_0^{(i)} \,\, (f_0^{(i)})^{\circ} \,\, u_{1-i}^{\circ} \,\, (f_1^{(i)})^{\circ} \,\, v_1^{(i)} \,\, T_1^{(i)} \,\, v_2^{(i)} \,\, (f_2^{(i)})^{\circ}  \ldots \,\, v_{2k-1}^{(i)} \,\, T_{k}^{(i)} \,\, v_0^{(i)} \label{eq:Ti}$$
for some vertices $v_0^{(i)}, v_1^{(i)}, \ldots, v_{2k-1}^{(i)} \in V_i$ and, for $j=1,2,\ldots,k$,  some $(v_{2j-1}^{(i)},v_{2j}^{(i)})$-trails $T_j^{(i)}$ in $H^{\alpha}[V_i]$. (Here, subscripts are evaluated modulo $2k$.)

Let $\pi: \ZZ_{2k} \to \ZZ_{2k}$ be a bijection such that $f_{\pi(j)}^{(1)}=f_j^{(0)}$, for all $j \in \ZZ_{2k}$. We thus have that $v_{\pi(j)}^{(1)} \in f_j^{(0)}$, for all $j \in \ZZ_{2k}$.

We now link the $(v_{2j-1}^{(0)},v_{2j}^{(0)})$-trails $T_j^{(0)}$ and $(v_{2j-1}^{(1)},v_{2j}^{(1)})$-trails $T_j^{(1)}$, for $j=1,2,\ldots,k$, into a family ${\cal T}$ of closed trails in $H^{\alpha}$ using the edges $(f_j^{(0)})^{\alpha}$, for $j \in \ZZ_{2k}$. In particular, the new closed trails will traverse each edge $(f_j^{(0)})^{\alpha}$ via anchors $v_j^{(0)}$ and $v_{\pi(j)}^{(1)}$.

Finally, let
$$\F= \left( \F^{(0)}-\{ T^{(0)} \} \right ) \cup \left( \F^{(1)}-\{ T^{(1)} \} \right ) \cup {\cal T}.$$
Since for each $i \in \ZZ_2$, the closed trails in $\F^{(i)}-\{ T^{(i)} \}$ traverse all edges of $H^\alpha[V_i]$ that are not traversed by $T^{(i)}$, and the closed trails in ${\cal T}$ traverse all edges of $H^\alpha[V_0] \cup H^\alpha[V_1]$ that are traversed by $T^{(0)}$ or $T^{(1)}$, as well as all edges in $\{ f^{\alpha}: f \in \alpha^{-1}(01) \}$, we conclude that $\F$
is an Euler family of $H^{\alpha}$.

It then follows by Lemma~\ref{lem:H^alpha}(ii) that $H$ admits an Euler family.
\end{proof}

Observe that in the proof of sufficiency in Theorem~\ref{thm:EF-collapsed} we have no control over the number of closed trails in the family ${\cal T}$; hence the analogous result for Euler tours does not hold in general. A weaker result is proved below: necessity is guaranteed by Corollary~\ref{cor:ET-collapsed-1}, while sufficiency is proved in Corollary~\ref{cor:ET-collapsed2} with an additional assumption on the edge cut assignment. This additional assumption, however, always holds  for edge cuts of cardinality at most 3.

\begin{cor}\label{cor:ET-collapsed-1}
Let $H$ be a connected hypergraph with a minimal edge cut $F$, and $\{ V_0, V_1 \}$ a partition of $V(H)$ into unions of the vertex sets of the connected components of $H \sm F$.

If $H$ admits an Euler tour, then there exists an edge cut assignment $\alpha: F \to \ZZ_2^{[2]}$ such that $|\alpha^{-1}(01)|$ is even, and either
\begin{enumerate}[(i)]
\item $\alpha^{-1}(01) = \emptyset$ and, without loss of generality, $H^{\alpha}[V_0]$ has an Euler tour and $H^{\alpha}[V_1]$ is empty, or
\item $\alpha^{-1}(01) \ne \emptyset$, and for each $i \in \ZZ_2$, the collapsed hypergraph $H^{\alpha} \circ V_i$ has an Euler tour that traverses the collapsed vertex $u_i^{\circ}$ via each of the edges in $\{ f^{\circ}: f \in \alpha^{-1}(01) \}$.
\end{enumerate}
\end{cor}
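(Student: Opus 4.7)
The plan is to adapt the necessity direction of the proof of Theorem~\ref{thm:EF-collapsed}, taking advantage of the fact that each step of that construction preserves the single-closed-trail structure when the starting Euler family is a single Euler tour.

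First, I would take an Euler tour $T$ of $H$ and define the edge cut assignment $\alpha: F \to \ZZ_2^{[2]}$ exactly as in the proof of Theorem~\ref{thm:EF-collapsed}, by setting $\alpha(f) = ij$ if $T$ traverses $f$ via a vertex in $V_i$ and a vertex in $V_j$. Applying Lemma~\ref{lem:H^alpha}(i) to the singleton Euler family $\{T\}$ yields an Euler family of $H^{\alpha}$ consisting of a single closed trail $T^{\alpha}$, which is therefore an Euler tour of $H^{\alpha}$. The parity argument from the proof of Theorem~\ref{thm:EF-collapsed} then applies verbatim: the only edges of $H^{\alpha}$ meeting both $V_0$ and $V_1$ are the $f^{\alpha}$ with $f \in \alpha^{-1}(01)$, and since $T^{\alpha}$ is closed it must traverse such edges an even number of times, giving $|\alpha^{-1}(01)|$ even.

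Next, I would split into two cases. In case (i), if $\alpha^{-1}(01) = \emptyset$, then no edge of $H^{\alpha}$ meets both $V_0$ and $V_1$, so the closed trail $T^{\alpha}$ lies entirely in $H^{\alpha}[V_0]$ or entirely in $H^{\alpha}[V_1]$; relabelling if necessary, I assume the former. Since $T^{\alpha}$ must traverse every edge of $H^{\alpha}$, this forces $H^{\alpha}[V_1]$ to be empty and exhibits $T^{\alpha}$ as an Euler tour of $H^{\alpha}[V_0]$, establishing (i). In case (ii), I would use the substitution from the proof of Theorem~\ref{thm:EF-collapsed}: begin $T^{\alpha}$ at a vertex incident with an edge of $\alpha^{-1}(01)$, put it in the alternating form given there, and replace each subsequence $v_i^{(0)} f_i^{\alpha} u_i^{(1)} T_i^{(1)} v_i^{(1)} (f_i')^{\alpha} u_i^{(0)}$ by $v_i^{(0)} f_i^{\circ} u_1^{\circ} (f_i')^{\circ} u_i^{(0)}$ to obtain a closed sequence $T^{\circ}$ in $H^{\alpha} \circ V_1$. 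A symmetric substitution produces the analogous $T^{\circ}$ in $H^{\alpha} \circ V_0$.

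The one point that needs care, and the nearest thing to an obstacle, is verifying that $T^{\circ}$ is a single Euler tour and not merely a family of closed trails, which is what the proof of Theorem~\ref{thm:EF-collapsed} produces prior to its final concatenation step. Because $T^{\alpha}$ is already a single closed trail, the substitution yields one closed sequence $T^{\circ}$ that revisits $u_1^{\circ}$ once per edge of $\alpha^{-1}(01)$; its edges are pairwise distinct, since they arise via injections ($e \mapsto e$ on edges of $H^{\alpha}[V_0]$, and $f^{\alpha} \mapsto f^{\circ}$ on edges indexed by $\alpha^{-1}(01)$) from the pairwise distinct edges of $T^{\alpha}$, so $T^{\circ}$ is a closed trail. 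Every edge of $H^{\alpha} \circ V_1$ is either an edge of $H^{\alpha}[V_0]$, traversed once by $T^{\alpha}$ and hence once by $T^{\circ}$, or of the form $f^{\circ}$ with $f \in \alpha^{-1}(01)$, traversed once by $T^{\circ}$ at the corresponding substitution; edges lying entirely in $V_1$ are removed by the collapsing. Hence $T^{\circ}$ is an Euler tour of $H^{\alpha} \circ V_1$ traversing $u_1^{\circ}$ via each $f^{\circ}$ with $f \in \alpha^{-1}(01)$, and the analogous statement for $H^{\alpha} \circ V_0$ follows by the symmetric argument, completing the proof.
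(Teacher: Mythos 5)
Your proposal is correct and follows essentially the same route as the paper, which likewise derives $\alpha$ from the tour, invokes Lemma~\ref{lem:H^alpha}(i) and the parity argument of Theorem~\ref{thm:EF-collapsed}, and then reuses that theorem's substitution construction; you simply make explicit the (correct) observation that starting from a single closed trail $T^{\alpha}$ the construction outputs a single closed trail $T^{\circ}$, so no concatenation step is needed.
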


\begin{proof}
Let $T$ be an Euler tour of $H$, and $\alpha: F \to \ZZ_2^{[2]}$ the edge cut assignment with $\alpha(f)=ij$ if and only if the edge $f$ is traversed by $T$ via a vertex in $V_i$ and a vertex in $V_j$ (where $i=j$ is possible). We establish that $|\alpha^{-1}(01)|$ is even just as in the proof of necessity in Theorem~\ref{thm:EF-collapsed}.

By Lemma~\ref{lem:H^alpha}(i), the hypergraph $H^{\alpha}$ has an Euler tour as well. If $\alpha^{-1}(01) = \emptyset$, then $H^{\alpha}$ is disconnected. Hence without loss of generality, $H^{\alpha}[V_0]$ has an Euler tour and $H^{\alpha}[V_1]$ is empty.

Suppose now that $\alpha^{-1}(01) \ne \emptyset$. Following the proof of necessity in Theorem~\ref{thm:EF-collapsed}, we can show that for each $i \in \ZZ_2$, the Euler tour $T$ of $H$ gives rise to an Euler tour $T_i^{\circ}$ of $H^{\alpha} \circ V_i$ that traverses the collapsed vertex $u_i^{\circ}$ via each of the edges in $\{ f^{\circ}: f \in \alpha^{-1}(01) \}$.
\end{proof}

\begin{cor}\label{cor:ET-collapsed2}
Let $H$ be a connected hypergraph with a minimal edge cut $F$, and $\{ V_0, V_1 \}$ a partition of $V(H)$ into unions of the vertex sets of the connected components of $H \sm F$. Assume that there exists an edge cut assignment $\alpha: F \to \ZZ_2^{[2]}$ such that $|\alpha^{-1}(01)| \in \{ 0, 2 \}$, and either
\begin{enumerate}[(i)]
\item $\alpha^{-1}(01) = \emptyset$ and, without loss of generality, $H^{\alpha}[V_0]$ has an Euler tour and $H^{\alpha}[V_1]$ is empty, or
\item $\alpha^{-1}(01) \ne \emptyset$, and for each $i \in \ZZ_2$, the collapsed hypergraph $H^{\alpha} \circ V_i$ has an Euler tour that traverses the collapsed vertex $u_i^{\circ}$.
\end{enumerate}
Then $H$ admits an Euler tour.
\end{cor}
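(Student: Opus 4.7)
The plan is to unwind the Euler tour inside each collapsed hypergraph, splice the two resulting open pieces together through the crossing edges $f_0^{\alpha}$ and $f_1^{\alpha}$, and then invoke Lemma~\ref{lem:H^alpha}(ii) to transfer the result back to $H$.

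Case (i) is essentially free: if $\alpha^{-1}(01)=\emptyset$ and $H^{\alpha}[V_1]$ is empty, then chasing the definition of $e^{\alpha}$ shows that every edge of $H^{\alpha}$ lies inside $V_0$, so $H^{\alpha}$ and $H^{\alpha}[V_0]$ carry the same edge multiset; the given Euler tour of $H^{\alpha}[V_0]$ is thus already an Euler tour of $H^{\alpha}$, and Lemma~\ref{lem:H^alpha}(ii) delivers one in $H$.

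The main case is $|\alpha^{-1}(01)|=2$; write $\alpha^{-1}(01)=\{f_0,f_1\}$. The key structural observation is that the only edges of $H^{\alpha}$ meeting both $V_0$ and $V_1$ are $f_0^{\alpha}$ and $f_1^{\alpha}$, so for each $i\in\ZZ_2$ the collapsed vertex $u_i^{\circ}$ has degree exactly $2$ in $H^{\alpha}\circ V_i$, with incident edges $f_0^{\circ}$ and $f_1^{\circ}$. Consequently the Euler tour $T_i^{\circ}$ traverses $u_i^{\circ}$ exactly once, and after a suitable choice of starting point and orientation I may write
\begin{align*}
T_0^{\circ}&=u_0^{\circ}\,f_0^{\circ}\,v_0^{(1)}\,T^{(1)}\,v_1^{(1)}\,f_1^{\circ}\,u_0^{\circ},\\
T_1^{\circ}&=u_1^{\circ}\,f_0^{\circ}\,v_0^{(0)}\,T^{(0)}\,v_1^{(0)}\,f_1^{\circ}\,u_1^{\circ},
\end{align*}
where $v_j^{(i)}\in f_j\cap V_i$ for all $i,j\in\ZZ_2$, and $T^{(i)}$ is a $(v_0^{(i)},v_1^{(i)})$-trail in $H^{\alpha}[V_i]$ covering every edge of $H^{\alpha}[V_i]$.

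Since $v_j^{(0)}\in V_0$ and $v_j^{(1)}\in V_1$ are distinct vertices of $f_j^{\alpha}=f_j$, the sequence
$$W\;=\;v_0^{(0)}\,T^{(0)}\,v_1^{(0)}\,f_1^{\alpha}\,v_1^{(1)}\,\overline{T^{(1)}}\,v_0^{(1)}\,f_0^{\alpha}\,v_0^{(0)}$$
(with $\overline{T^{(1)}}$ the reverse of $T^{(1)}$) is a legitimate closed walk in $H^{\alpha}$. Because $E(H^{\alpha})$ partitions as $E(H^{\alpha}[V_0])\sqcup E(H^{\alpha}[V_1])\sqcup\{f_0^{\alpha},f_1^{\alpha}\}$ and $W$ uses each piece exactly once, $W$ is an Euler tour of $H^{\alpha}$; a final application of Lemma~\ref{lem:H^alpha}(ii) produces an Euler tour of $H$. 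The only real obstacle here is bookkeeping --- verifying the degree-$2$ claim for $u_i^{\circ}$, checking that the normalised forms of $T_i^{\circ}$ are legitimate (including the degenerate subcases in which some $T^{(i)}$ has length zero or $v_0^{(i)}=v_1^{(i)}$), and confirming that the two glueing endpoints belong to the correct crossing edges --- but each of these reduces immediately to the definitions of $H^{\alpha}$ and $H^{\alpha}\circ V_i$.
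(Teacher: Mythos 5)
Your proposal is correct and follows essentially the same route as the paper: it normalizes the two Euler tours of the collapsed hypergraphs around the degree-$2$ collapsed vertex (so that traversal of $u_i^{\circ}$ is automatically via both $f_0^{\circ}$ and $f_1^{\circ}$), splices the resulting trails in $H^{\alpha}[V_0]$ and $H^{\alpha}[V_1]$ through the two crossing edges $f_0^{\alpha}$, $f_1^{\alpha}$ into an Euler tour of $H^{\alpha}$, and pulls it back to $H$ via Lemma~\ref{lem:H^alpha}(ii), just as the paper does (your global labelling of $f_0,f_1$ neatly replaces the paper's case distinction $f_j^{(0)}=f_j^{(1)}$ or $f_j^{(0)}=f_{1-j}^{(1)}$). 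One cosmetic nitpick: $E(H^{\alpha})$ is not literally $E(H^{\alpha}[V_0])\sqcup E(H^{\alpha}[V_1])\sqcup\{f_0^{\alpha},f_1^{\alpha}\}$, since the induced subhypergraphs also contain the truncations $f_j\cap V_i$; the partition your argument actually uses --- edges of $H^{\alpha}$ contained in $V_0$, edges contained in $V_1$, and the two crossing edges --- is the correct one and is exactly what the trails $T^{(0)}$, $T^{(1)}$ cover.
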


\begin{proof}
If $\alpha^{-1}(01) = \emptyset$, then $H^{\alpha} = H^{\alpha}[V_0] \cup H^{\alpha}[V_1]$, and since  $H^{\alpha}[V_1]$ is empty and $H^{\alpha}[V_0]$ admits an Euler tour, so does $H^{\alpha}$. By Lemma~\ref{lem:subhypergraph}(ii), it follows that $H$ admits an Euler tour.

The case $\alpha^{-1}(01) \ne \emptyset$ is similar to  the proof of sufficiency in Theorem~\ref{thm:EF-collapsed}, assuming Condition (ii).
We have $\alpha^{-1}(01)=\{ f_0^{(0)},  f_1^{(0)}\}= \{ f_0^{(1)},  f_1^{(1)}\}$.
By assumption, for each $i \in \ZZ_2$, the collapsed hypergraph $H^{\alpha} \circ V_{1-i}$ admits an Euler tour $T^{(i)}$ that traverses the collapsed vertex $u_{1-i}^{\circ}$, necessarily via the edges $f^{\circ}$, for $f \in \alpha^{-1}(01)$. Then $T^{(i)}$ must be of the form
$$T^{(i)}=v_0^{(i)} \,\, (f_0^{(i)})^{\circ} \,\, u_{1-i}^{\circ} \,\, (f_1^{(i)})^{\circ} \,\, v_1^{(i)} \,\, T_1^{(i)} \,\,  v_0^{(i)}$$
for some vertices $v_0^{(i)}, v_1^{(i)} \in V_i$ and a $(v_{1}^{(i)},v_{0}^{(i)})$-trail $T_1^{(i)}$ in $H^{\alpha}[V_i]$.

Since either $f_j^{(0)}= f_j^{(1)}$ or $f_j^{(0)}= f_{1-j}^{(1)}$ for $j \in \ZZ_2$, linking the $(v_{1}^{(0)},v_{0}^{(0)})$-trail $T_1^{(0)}$ and $(v_{1}^{(1)},v_{0}^{(1)})$-trail $T_1^{(1)}$  using the edges $(f_0^{(0)})^{\alpha}$ and $(f_1^{(0)})^{\alpha}$ clearly results in a closed trail $T$ in $H^{\alpha}$ that traverses all edges of $H^{\alpha}$ traversed by $T^{(0)}$ and $T^{(1)}$, as well as the edges $(f_0^{(0)})^{\alpha}$ and $(f_1^{(0)})^{\alpha}$.
We conclude that $T$ is an Euler tour of $H^{\alpha}$.

By Lemma~\ref{lem:H^alpha}(ii), we have that $H$ admits an Euler tour.
\end{proof}

Observe that in the case $|F| \le 3$, Corollary~\ref{cor:ET-collapsed2} gives a full converse to Corollary~\ref{cor:ET-collapsed-1}.

In Theorem~\ref{thm:EF-collapsed} and Corollaries~\ref{cor:ET-collapsed-1} and \ref{cor:ET-collapsed2}, we seem to be translating the problem of finding an Euler family (tour) to a different problem, namely, of finding an Euler family (tour) that traverses a specified vertex via each of the specified edges. In the next lemma, we show that an algorithm to find the former can be used to find the latter as well.

\begin{lem}\label{lem:fixed_vx}
Let $H$ be a hypergraph, $u \in V(H)$, and $F \subseteq E(H)$ such that each edge in $F$ is incident with the vertex $u$. Construct a hypergraph $H'$ from $H$ as follows: \textcolor{black}{for each edge $f \in F$ such that $|f| \ge 3$,}
\begin{itemize}
\item adjoin a new vertex $u_f$, and
\item replace $f$ with edges $f'=(f-\{u \}) \cup \{ u_f \}$ and $e_f=\{ u_f,u \}$.
\end{itemize}
Then $H$ has an Euler family (tour) traversing vertex $u$ via each edge in $F$ if and only if $H'$ has an Euler family (tour).
\end{lem}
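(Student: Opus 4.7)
The plan is to prove both directions by explicit local modifications of the closed trails, since $H'$ is obtained from $H$ by a local surgery at the vertex $u$ that respects the combinatorial structure of walks.

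For necessity, suppose $\F$ is an Euler family of $H$ that traverses $u$ via every $f \in F$. Since each $f \in F$ is traversed exactly once in $\F$, and by hypothesis via $u$, the unique trail $T \in \F$ containing $f$ has a subsequence of the form $x\, f\, u$ or $u\, f\, x$ for some $x \in f - \{u\}$. I will replace every such occurrence by $x\, f'\, u_f\, e_f\, u$ or $u\, e_f\, u_f\, f'\, x$, respectively, leaving all other edges of $T$ untouched. The resulting sequence $T'$ is a closed walk in $H'$; it is a trail because the edges of $H$ outside $F$ are preserved, and each $f \in F$ contributes the pair $f', e_f$ used exactly once. Collecting these gives $\F' = \{T' : T \in \F\}$. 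Edge-disjointness of $\F'$ follows from that of $\F$ together with the fact that each $f', e_f$ lies in only one trail. Anchor-disjointness of $\F'$ follows because $\F$ was anchor-disjoint and the only new anchors are the $u_f$, each of which is introduced in exactly one trail. Finally, $\F'$ covers every edge of $H'$.

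For sufficiency, suppose $H'$ has an Euler family $\F'$. The key structural observation is that each new vertex $u_f$ has degree exactly $2$ in $H'$, being incident only to $e_f$ and $f'$. Both of these edges are traversed by $\F'$, and both contain $u_f$, so the trails traversing $e_f$ and $f'$ share $u_f$ as an anchor and must coincide by anchor-disjointness; call this trail $T'$. A short double-counting argument on anchor flags shows that $u_f$ appears exactly once in $T'$ (if it appeared twice, the four resulting incident-edge slots would force repetition among the only two available edges $e_f, f'$), and the two edges flanking this occurrence of $u_f$ are precisely $e_f$ and $f'$. Since $e_f = \{u_f, u\}$ has cardinality $2$, the anchor of $e_f$ opposite $u_f$ must be $u$. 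Hence $T'$ contains a subsequence $u\, e_f\, u_f\, f'\, x$ or its reverse, for some $x \in f - \{u\}$. Replacing every such subsequence by $u\, f\, x$ (or its reverse) converts each trail in $\F'$ into a closed trail in $H$, and the resulting family $\F$ is an Euler family of $H$ (edge- and anchor-disjointness carry over, and now the vertices $u_f$ no longer appear). The replacement produces a subsequence $u\, f$ for every $f \in F$, so $\F$ traverses $u$ via each edge in $F$.

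For the Euler tour statement, observe that both transformations above preserve the number of closed trails in the family (the forward direction modifies each trail in place, and the backward direction likewise), so an Euler tour corresponds to an Euler tour under the bijection between the two families. The main obstacle in the argument is the bookkeeping in the backward direction — specifically, nailing down that in any Euler family of $H'$ the pair $e_f, f'$ must appear as consecutive edges around the degree-$2$ vertex $u_f$, with $u$ on the far side of $e_f$ — but this reduces cleanly to the degree-$2$ observation and the cardinality of $e_f$.
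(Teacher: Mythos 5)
Your proof is correct and follows essentially the same route as the paper: the same local replacement of $fu$/$uf$ by $f'u_fe_fu$/$ue_fu_ff'$ in the forward direction, the reverse replacement in the backward direction (where your degree-$2$ argument at $u_f$ just makes explicit what the paper asserts directly), and preservation of $|\F|=|\F'|$ for the tour case.
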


\begin{proof}
\textcolor{black}{Let $F_{\ge 3}=\{ f \in F: |f| \ge 3 \}$.}

Assume $\F$ is an Euler family of $H$ traversing vertex $u$ via each edge in $F$. For each closed trail $T$ in $\F$, obtain a sequence $T'$ by replacing each subsequence of the form $fu$ in $T$ \textcolor{black}{(where $f \in F_{\ge 3}$)} with the subsequence $f' u_f e_f u$, and each subsequence of the form $uf$ \textcolor{black}{(where $f \in F_{\ge 3}$)} with the subsequence $u e_f u_f f'$. It is clear that $\F'=\{ T': T \in \F \}$ is an Euler family of $H'$.

Conversely, if $\F'$ is an Euler family of $H'$, then it must traverse each edge of the form $e_f$, \textcolor{black}{for  $f \in F_{\ge 3}$,} via either the subtrail $f' u_f e_f u$ or the subtrail $u e_f u_f f'$. For each closed trail $T' \in \F'$, obtain a sequence $T$ by replacing each subsequence of the form $f' u_f e_f u$ \textcolor{black}{(where $f \in F_{\ge 3}$)} with $fu$, and each subsequence of the form $u e_f u_f f'$ \textcolor{black}{(where $f \in F_{\ge 3}$)} with $uf$. It is then easy to see that $\F=\{ T: T' \in \F' \}$ is an Euler family of $H$ that traverses vertex $u$ via each edge in \textcolor{black}{$F_{\ge 3}$, as well as via each edge in $F-F_3$, as the edges in the latter set are of cardinality 2.}

Since in both parts of the proof we have $|\F|=|\F'|$, the statement for Euler tours holds as well.
\end{proof}

\bigskip

We shall now describe an algorithm based on Theorem~\ref{thm:EF-collapsed}, Lemma~\ref{lem:fixed_vx}, \textcolor{black}{ and Corollary~\ref{cor:2-edges}} that determines whether a hypergraph admits an Euler family.

\begin{alg}\label{alg:EF2} Does a connected hypergraph $H$ admit an Euler family? {\rm
\begin{enumerate}[(1)]
\item Sequentially delete vertices of degree at most 1 from $H$ until no such vertices remain or $H$ is trivial.
If at any step $H$ has an edge of cardinality less than 2, then $H$ has no Euler family --- exit.

\color{black}

\item Let $H^{[2]}$ be the graph obtained from $H$ by deleting all edges of cardinality greater than 2, and $G$ a maximal even subgraph of $H^{[2]}$. Delete all edges of $G$ from $H$.

\item Repeat Steps (1) and (2) as needed.
\item If $H$ is a graph, then it has an Euler family if and only if it is even --- exit.

\color{black}
\item Find a minimal edge cut $F$.
\item Let $H_i$, for $i \in I$, be the connected components of $H \sm F$.
\item Find a bipartition $\{ V_0, V_1 \}$ of $V(H)$ into unions of $V(H_i)$, for $i \in I$.
\item For all edge cut assignments $\alpha: F \to \ZZ_2^{[2]}$: \label{step-EF}
\begin{enumerate}
\item If $|\alpha^{-1}(01)|$ is odd --- discard this $\alpha$.
\item If $\alpha^{-1}(01) = \emptyset$: if $H^{\alpha}[V_0]$ and $H^{\alpha}[V_1]$ both have Euler families {\em (recursive call)}, then $H$ has an Euler family --- exit.
\item If $\alpha^{-1}(01) \ne \emptyset$: if $H^{\alpha} \circ V_1$ and $H^{\alpha}\circ V_0$ both have Euler families {\em (recursive call)} that traverse the collapsed vertex via each of the edges of the form $f^{\circ}$, for $f \in \alpha^{-1}(01)$, then $H$ has an Euler family --- exit.
\end{enumerate}
\item $H$ has no Euler family --- exit.
\end{enumerate}
}
\end{alg}

The analogous algorithm for Euler tours, Algorithm~\ref{alg:ET2} below, relies on Corollary~\ref{cor:ET-collapsed2} and Lemma~\ref{lem:fixed_vx}, \textcolor{black}{as well as Theorem~\ref{thm:ET-J} and Corollaries~\ref{cor:cut-edge} and \ref{cor:2-edges}}. If the hypergraph has no edge cut satisfying the assumptions of Corollary~\ref{cor:ET-collapsed2},  then Algorithm~\ref{alg:ET} is invoked.

\begin{alg}\label{alg:ET2} Does a connected hypergraph $H$ admit an Euler tour?
{\rm
\begin{enumerate}[(1)]
\item Sequentially delete vertices of degree at most 1 from $H$ until no such vertices remain or $H$ is trivial.
If at any step $H$ has an edge of cardinality less than 2, then $H$ is not eulerian --- exit.

\color{black}
\item Let $H^{[2]}$ be the graph obtained from $H$ by deleting all edges of cardinality greater than 2, and $G$ a maximal even subgraph of $H^{[2]}$. For each non-empty connected component $G_1$ of $G$, replace $E(G_1)$ in $H$ with the edge set of any cycle on $V(G_1)$.

\item Repeat Steps (1) and (2) as needed.

\item If $H$ is a graph, then it is eulerian if and only if it is even and has at most one non-trivial connected component --- exit.

\color{black}
\item Find a minimum edge cut $F$. If $|F|=1$, then $H$ is not eulerian --- exit. \label{step:min-cut}
\item Let $H_i$, for $i \in I$, be the connected components of $H \sm F$.

\color{black}
\item Let $J = \{ i \in I: H_i \mbox{ is non-empty} \}$. If $|F|<|J|$, then $H$ is not eulerian --- exit.

\color{black}
\item \label{step-main} For all bipartitions  $\{ V_0, V_1 \}$ of $V(H)$ into unions of $V(H_i)$, for $i \in I$   \\
    and for all edge cut assignments $\alpha: F \to \ZZ_2^{[2]}$:
\begin{enumerate}
\item If $|\alpha^{-1}(01)|$ is odd --- discard this $\alpha$.
\item If $|\alpha^{-1}(01)| = 0$: if $H^{\alpha}[V_0]$ is empty and $H^{\alpha}[V_1]$ has an Euler tour, or vice-versa {\em (recursive call)}, then $H$ has an Euler tour --- exit.
\item If $|\alpha^{-1}(01)|=2$: if $H^{\alpha} \circ V_1$ and $H^{\alpha}\circ V_0$ both have an Euler tour  that traverses the collapsed vertex {\em (recursive call)}, then $H$ has an Euler tour --- exit.
\end{enumerate}
\item Use Algorithm~\ref{alg:ET}. \label{step:hope-not}
\end{enumerate}
}
\end{alg}

\begin{rems}\label{rems:2}{\rm
\begin{enumerate}[(i)]
\item Remarks~\ref{rems}(i) and \ref{rems}(vii) apply to Algorithms~\ref{alg:EF2} and \ref{alg:ET2} as well.
\item \label{rem:min2} In Step~(\ref{step:min-cut}) of Algorithm~\ref{alg:ET2}, we chose to find a minimum edge cut --- see Remark~\ref{rems}(\ref{rem:min}) --- in the hope of finding an edge cut of cardinality at most 3; in this case, a reduction using Corollary~\ref{cor:ET-collapsed2} is guaranteed, and Step~(\ref{step:hope-not}) is not reached in this call.

\color{black}
\item As in Algorithms~\ref{alg:EF} and \ref{alg:ET} --- see Remarks~\ref{rems}(\ref{rem:reduction}) --- a few adjustments may be necessary to guarantee that Algorithms~\ref{alg:EF2} and \ref{alg:ET2} indeed terminate.

\color{black}
    For any hypergraph $H=(V,E)$, let $|H|_{\ge 3}=\sum_{e \in E, |e|\ge 3} |e|$ and $|H|_{2}=\sum_{e \in E, |e|=2} |e|$, and call $|H|_{\ge 3}$ the {\em relevant size} of $H$. Since the problem of existence of an Euler family (tour) is computationally easy for graphs (Step (4) in both algorithms), for either algorithm to terminate, it suffices that each recursive call be performed on a hypergraph $H'$ with $|H'|<|H|$ or $|H'|_{\ge 3}<|H|_{\ge 3}$. (Note that the use of Lemma~\ref{lem:fixed_vx} in Step (8c) may result in $|H'|_2>|H|_2$, but as long as $|H'|_{\ge 3}<|H|_{\ge 3}$, we are still moving in the right direction.)
    To guarantee such a relevant reduction, we first proceed as described in Remarks~\ref{rems}(\ref{rem:reduction}), that is, favouring edge cut assignments $\alpha$ that yield $|H^\alpha|_{\ge 3}< |H|_{\ge 3}$, and therefore $|H'|_{\ge 3}<|H|_{\ge 3}$ for all recursive calls.

\color{black}
    However, we may need to examine an edge cut assignment $\alpha$ that yields $|H^\alpha|_{\ge 3}= |H|_{\ge 3}$. In this case, we know that $\alpha(f)=12$ for all $f \in F$, and the recursive call involves $H'=H^{\alpha} \circ V_i$ (for $i \in \{ 0,1 \}$). Clearly, $|H'|_{\ge 3} \le |H|_{\ge 3}$, but suppose that $|H'|_{\ge 3} = |H|_{\ge 3}$ and $|H'| \ge |H|$ (in other words, $|H'|_{2} \ge |H|_{2}$). This situation may arise when Lemma~\ref{lem:fixed_vx} has been invoked or $|(H\sm F)[V_i]|$ is small.

\color{black}
    If there exists $f \in F$ such that $|f| \ge 3$, then $|f^\circ|=|f|$ and $f \cap V_i=\{ x \}$ for some vertex $x \in V_i$. For each $y \in f \cap V_{1-i}$, we then let $e_y=\{x,y\}$ and $H_y'=(H' \sm f) + \{ e_y \}$. Note that $|H_y'|_{\ge 3} < |H|_{\ge 3}$, and it is easy to see that $H'$ admits an Euler family (tour) if and only if at least one such hypergraph $H_y'$ does. Hence, if necessary, we test all such hypergraphs $H_y'$.

\color{black}
    Otherwise, it must be that $|f|=2$ for all $f \in F$, and $|H'| = |H|$ as Lemma~\ref{lem:fixed_vx} was not invoked. Fix any $f^\ast \in F$, and let $f^\ast \cap V_{1-i}=\{ x \}$. For each $f \in F-\{ f^\ast\}$, we then let $f \cap V_{1-i}=\{ y_f \}$, $e_f=\{x,y_f\}$, and $H_f'=(H' \sm \{f^\ast, f\}) + \{ e_f \}$. Note that $|H_f'| < |H'|=|H|$, and again it is easy to see that $H'$ admits an Euler family (tour) if and only if at least one such hypergraph $H_f'$ does.

\color{black}
\item \label{rem:complexity} \textcolor{black}{In Step (8) of Algorithm~\ref{alg:ET2}, we examine all bipartitions of $V(H)$ to maximize the chance of finding an edge cut assignment $\alpha$ such that $|\alpha^{-1}(01)| \in \{ 0,2 \}$.} Observe that for a given edge cut $F$ such that $H \sm F$ has exactly $|I|$ connected components, the number of bipartitions $\{ V_0, V_1 \}$ is $2^{|I|-1}-1$, and the number of edge cut assignments $\alpha: F \to \ZZ_2^{[2]}$ is at most $3^{|F|}$. Hence the loop at Step~(\ref{step-main}) of Algorithm~\ref{alg:ET2} will be executed at most $(2^{|I|-1}-1) \cdot 3^{|F|}$ times.
\item \label{rem:bipartition} For suitable bipartitions $\{ V_0, V_1 \}$, we expect that the reduction at Step~(\ref{step-EF}) in both algorithms is the most efficient (that is, the number of recursive calls is minimized) if $|V_0|$ and $|V_1|$ are as equal as possible.
\end{enumerate}}
\end{rems}

\color{black}

\begin{figure}[t]
\centerline{\includegraphics[scale=0.8]{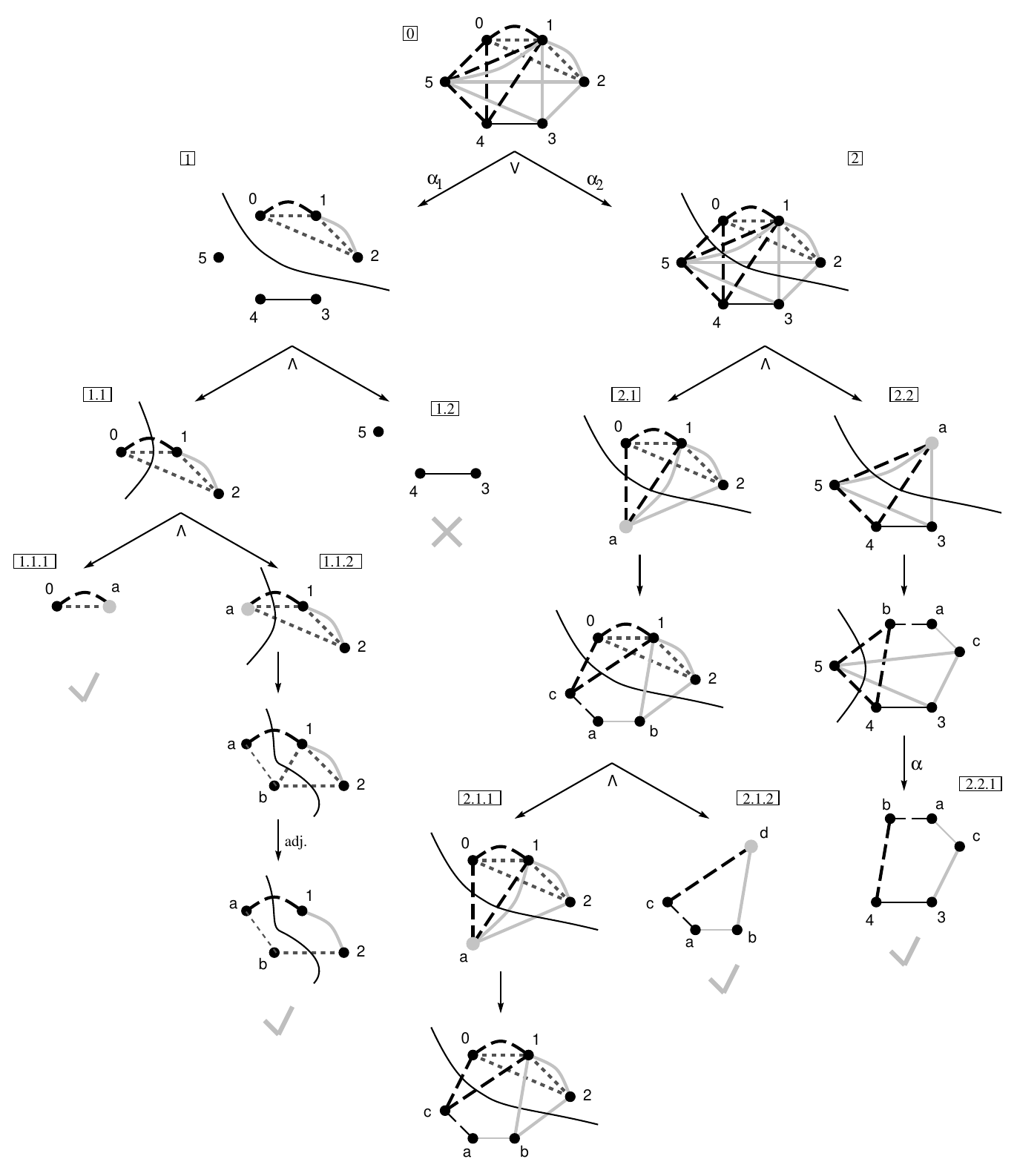}}
\caption{Illustrating Example~\ref{eg:Alg2} (continued in Figure~\ref{fig:Fig4}).}\label{fig:Fig3}
\end{figure}

\begin{figure}[t]
\centerline{\includegraphics[scale=0.78]{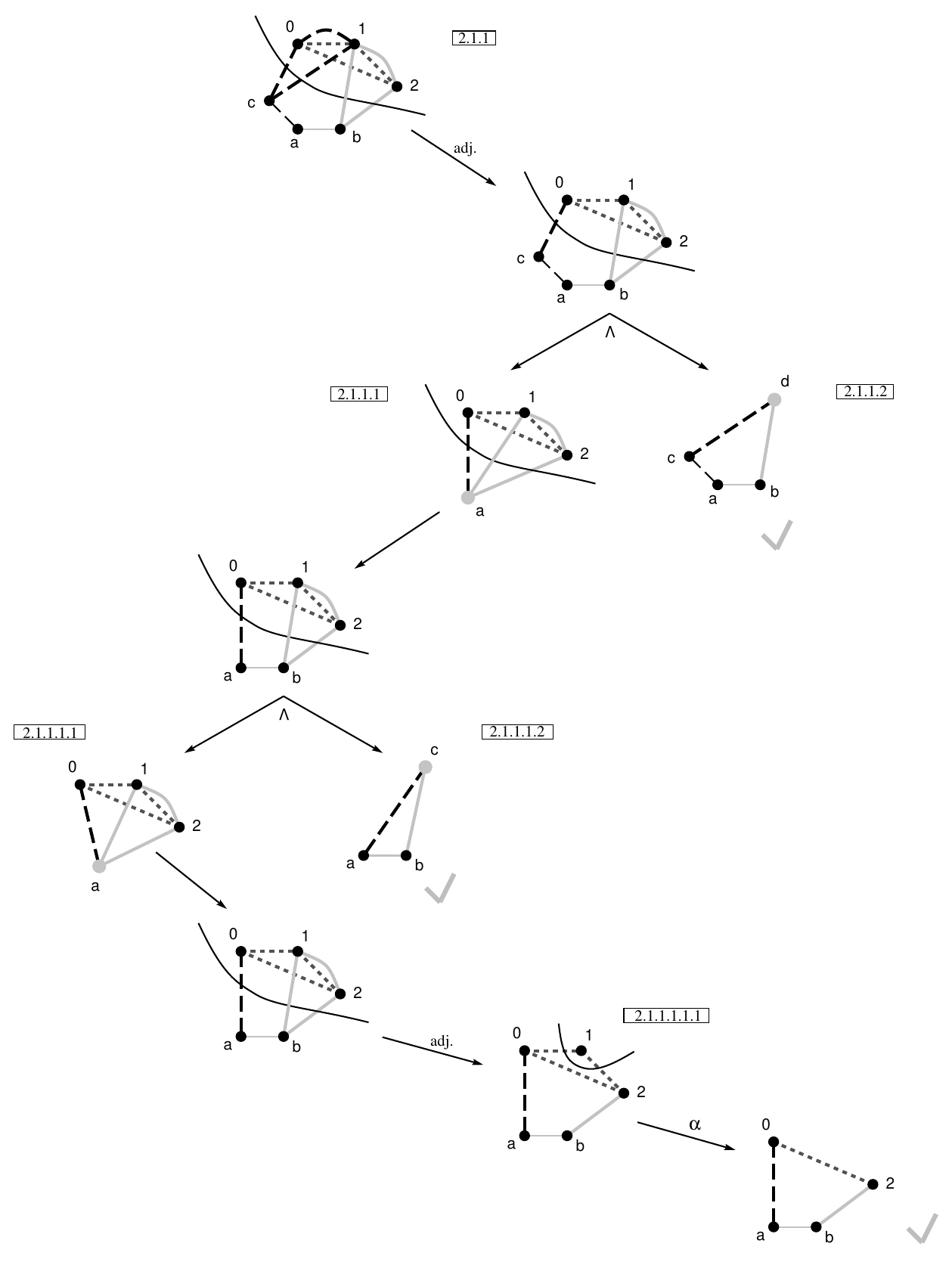}}
\caption{Illustrating Example~\ref{eg:Alg2}  (continued from Figure~\ref{fig:Fig3}).}\label{fig:Fig4}
\end{figure}

\begin{eg}\label{eg:Alg2}{\rm
We shall now demonstrate Algorithm~\ref{alg:EF2} using the same hypergraph as in Example~\ref{eg:Alg1}; that is, hypergraph $H=(V,E)$ with $V=\ZZ_6$ and $E=\{ 012, 0145,1235,34 \}$. The main hypergraphs in the procedure are shown in Figures~\ref{fig:Fig3} and \ref{fig:Fig4}. We use the same symbols as in Figure~\ref{fig:Fig2}; in addition, the collapsed vertices are shown in grey, and new vertices are labelled $a,b,c, \ldots$. New edges arising from the use of Lemma~\ref{lem:fixed_vx} are drawn in the same style as their parent edges, just thinner. Note that some minor details are omitted for brevity.

{\em Iteration 0.} The input hypergraph is  $H=\left(\ZZ_6,\{ 012, 0145,1235,34 \} \right)$, and as in Example~\ref{eg:Alg1}, we start by choosing a minimal edge cut $F=\{ 0145,1235 \}$, so $H\sm F$ has connected components $H_1, H_2$, and $H_3$ with vertex sets $V(H_1)=\ZZ_3$, $V(H_2)=\{ 3,4 \}$, and $V(H_3)=\{ 5 \}$. We then choose the bipartition $\{ V_0,V_1 \}$ with $V_0=\ZZ_3$ and $V_1=\{ 3,4,5 \}$, and  first examine the edge cut assignment $\alpha$ with $\alpha(0145)=\alpha(1235)=0$ (Iteration 1), and then $\alpha(0145)=\alpha(1235)=01$ (Iteration 2).

{\em Iteration 1.} We have $H^\alpha=\left(\ZZ_6,\{ 01, 012, 12,34 \} \right)$. As $\alpha^{-1}(01) = \emptyset$, we need to test its subhypergraphs $H^\alpha[V_0]$ and $H^\alpha[V_1]$ (Iterations 1.1 and 1.2, respectively).

{\em Iteration 1.1.} The input hypergraph is  $H=\left(\ZZ_3,\{ 01,012, 12 \} \right)$. We choose a minimal edge cut $F=\{ 01,012 \}$. The hypergraph $H\sm F$ has connected components $H_1$ and $H_2$, with vertex sets $V(H_1)=\{ 0 \}$ and $V(H_2)=\{ 1,2 \}$, so our bipartition is $\{ V_0,V_1 \}$ with $V_0=\{ 0 \}$ and $V_1=\{ 1,2 \}$. The only edge cut assignment $\alpha$ with $|\alpha^{-1}(01)|$ even is defined by $\alpha(01)=\alpha(012)=01$, which yields $H^\alpha=H$. As $\alpha^{-1}(01) \ne \emptyset$, we need to test $H^\alpha \circ V_0$ (Iteration 1.1.1) and $H^\alpha \circ V_1$ (Iteration 1.1.2).

{\em Iteration 1.1.1.} We have $H=\left(\{ 0,a \},\mset{ 0a,0a } \right)$. This is an even graph, so it admits an Euler family traversing the collapsed vertex $a$.

{\em Iteration 1.1.2.} We have $H=\left(\{ 1,2,a\},\{ 12,12a,1a \} \right)$, with $a$ being the collapsed vertex. To ensure its traversal, we use Lemma~\ref{lem:fixed_vx}, modifying the hypergraph to $H'=\left(\{ 1,2,a,b\},\{ 12,12b,1a,ab \} \right)$. As the size of the hypergraph has increased from Iteration 1.1, while the relevant size remained the same, we replace the edge $12b$ with $2b$ (see Remarks~\ref{rems:2}(\ref{rem:reduction})), which results in an even graph. This successfully completes Iteration 1.1.

{\em Iteration 1.2.} The input hypergraph $H=\left(\{ 3,4,5 \},\{ 34 \} \right)$ is rejected in Step (1). Iteration 1 is thus completed, but unsuccessfully.

{\em Iteration 2.} We have $H^\alpha=H$. As $|\alpha^{-1}(01)| =2$, we need to test $H^\alpha \circ V_0$ (Iteration 2.1) and $H^\alpha \circ V_1$ (Iteration 2.2).

{\em Iteration 2.1.} The input hypergraph is  $H=\left(\{ 0,1,2,a \},\{ 012,01a,12a \} \right)$, with $a$ being the collapsed vertex. To ensure its traversal, we use Lemma~\ref{lem:fixed_vx}, modifying the hypergraph to $H'=\left(\{ 0,1,2,a,b,c\},\{ 012,01c,12b,ab,ac \} \right)$.
We choose a minimal edge cut $F=\{ 01c,12b \}$. The hypergraph $H\sm F$ has two connected components, yielding a single bipartition $\{ V_0,V_1 \}$, with $V_0=\{ 0,1,2 \}$ and $V_1=\{ a,b,c \}$.
We choose the edge cut assignment $\alpha$ with $\alpha(01c)=\alpha(12b)=01$, which results in $H^\alpha=H$.
As $|\alpha^{-1}(01)| =2$, we need to test $H^\alpha \circ V_0$ (Iteration 2.1.1) and $H^\alpha \circ V_1$ (Iteration 2.2.2).

\FloatBarrier

{\em Iteration 2.1.1.} The input hypergraph is (again) $H=\left(\{ 0,1,2,a \},\{ 012,01a,12a \} \right)$, with $a$ being the newly collapsed vertex. To ensure its traversal, we use Lemma~\ref{lem:fixed_vx}, modifying the hypergraph to $H'=\left(\{ 0,1,2,a,b,c\},\{ 012,01c,12b,ab,ac \} \right)$. We observe that this hypergraph $H'$ is identical to $H'$ from Iteration 2.1. Hence we perform an adjustment as advised in Remarks~\ref{rems:2}(\ref{rem:reduction}), replacing the edge $01c$ with $0c$.

We then choose a minimal edge cut $F=\{ 0c,12b \}$,
resulting in the bipartition $\{ V_0,V_1 \}$, with $V_0=\{ 0,1,2 \}$ and $V_1=\{ a,b,c \}$.  The only edge cut assignment $\alpha$ with
$|\alpha^{-1}(01)|$ even is defined by $\alpha(0c)=\alpha(12b)=01$, which yields $H^\alpha=H$. As $\alpha^{-1}(01) \ne \emptyset$, we need to test $H^\alpha \circ V_0$ (Iteration 2.1.1.1) and $H^\alpha \circ V_1$ (Iteration 2.1.1.2).

{\em Iteration 2.1.1.1.} The input hypergraph is $H=\left(\{ 0,1,2,a \},\{ 012,0a,12a \} \right)$, with $a$ being the newly collapsed vertex. To ensure its traversal, we use Lemma~\ref{lem:fixed_vx}, modifying the hypergraph to $H'=\left(\{ 0,1,2,a,b\},\{ 012,0a,12b,ab \} \right)$. We choose a minimal edge cut $F=\{ 0a,12b \}$.
The only possible bipartition is $\{ V_0,V_1 \}$, with $V_0=\{ 0,1,2 \}$ and $V_1=\{ a,b \}$, and the only edge cut assignment $\alpha$ with
$|\alpha^{-1}(01)|$ even is defined by $\alpha(0a)=\alpha(12b)=01$,  resulting in $H^\alpha=H$. We thus need to test $H^\alpha \circ V_0$ (Iteration 2.1.1.1.1) and $H^\alpha \circ V_1$ (Iteration 2.1.1.1.2).

{\em Iteration 2.1.1.1.1.} The input hypergraph is (again) $H=\left(\{ 0,1,2,a \},\{ 012,0a,12a \} \right)$, with $a$ being the newly collapsed vertex.  To ensure its traversal, we modify the hypergraph to $H'=\left(\{ 0,1,2,a,b\},\{ 012,0a,12b,ab \} \right)$, and observe that this hypergraph $H'$ is identical to $H'$ from Iteration 2.1.1.1. Hence we perform an adjustment (see Remarks~\ref{rems:2}(\ref{rem:reduction})), replacing the edge $12b$ with $2b$.

{\em Iteration 2.1.1.1.1.1.} The input hypergraph is $H=\left(\{ 0,1,2,a,b \},\{ 012,0a,2b,ab \} \right)$. In Step (1), vertex 1 is deleted, resulting in the even graph $H'=\left(\{ 0,2,a,b \},\{ 02,0a,2b,ab \} \right)$. This successfully completes Iteration 2.1.1.1.1.

{\em Iteration 2.1.1.1.2.} The input hypergraph is $H=\left(\{ a,b,c \},\{ ab,ac,bc \} \right)$, with $c$ being the newly collapsed vertex. As $H$ is an even graph, it admits an Euler family that traverses the collapsed vertex. Iteration 2.1.1.1 is thus successfully completed.

\FloatBarrier

{\em Iteration 2.1.1.2.} We have $H=\left(\{ a,b,c,d \},\{ ab,ac,bd,cd \} \right)$, where $d$ is the newly collapsed vertex. Again, $H$ is an even graph, thus admitting an Euler family that traverses the collapsed vertex. Hence Iteration 2.1.1 is successfully completed.

{\em Iteration 2.1.2.} Identical to Iteration 2.1.1.2., successfully completing Iteration 2.1.

{\em Iteration 2.2.} The input hypergraph is  $H=\left(\{ 3,4,5,a \},\{ 34,35a,45a \} \right)$, with $a$ being the collapsed vertex. To ensure its traversal, we first modify the hypergraph to $H'=\left(\{ 3,4,5,a,b,c \},\{ 34,35c,45b,ab,ac \} \right)$.
We then choose a minimal edge cut $F=\{ 35c,45b \}$. The hypergraph $H\sm F$ has three connected components, and we choose a bipartition $\{ V_0,V_1 \}$, with $V_0=\{ 3,4,a,b,c \}$ and $V_1=\{ 5 \}$.
Furthermore, we choose the edge cut assignment $\alpha$ with $\alpha(35c)=\alpha(45b)=0$. As $\alpha^{-1}(01) = \emptyset$ and $H^\alpha[V_1]$ is empty, we only need to test $H^\alpha[V_0]$ (Iteration 2.2.1).

{\em Iteration 2.2.1.} We have $H=\left(\{ 3,4,a,b,c \},\{ 34,3c,4b,ab,ac \} \right)$, which is an even graph. This successfully completes Iterations 2.2 and 2.

We conclude that the original input hypergraph $H$ admits an Euler family.

Note that Algorithm~\ref{alg:ET2} performed on $H$ runs very similarly, except that Iteration 1 is omitted because $H^\alpha$ has more than one non-empty connected component.
}
\end{eg}

\color{black}

\color{black}

\section{Conclusions}

\color{black}
In this paper, we proved several results showing that, using an edge cut of a hypergraph $H$, the problem of existence of an Euler family (tour) in $H$ can be reduced to the analogous problem in smaller hypergraphs derived from $H$. These results led to Algorithms~\ref{alg:EF} and \ref{alg:EF2} for finding Euler families, and Algorithms~\ref{alg:ET} and \ref{alg:ET2} for finding Euler tours. We shall now compare the two algorithms in each pair.

\color{black}
\begin{rems}\label{rems:3}{\rm Comparison of Algorithms~\ref{alg:EF} and \ref{alg:EF2}.
\begin{enumerate}[(i)]
\item \label{rem:alpha} In the worst case, in Algorithms~\ref{alg:EF} and \ref{alg:EF2}, we need to check all edge cut assignments $\alpha: F \to I^{[2]}$ and  $\alpha: F \to \ZZ_2^{[2]}$, respectively. Since $|I| \ge |\ZZ_2|$, this loop will require at least as many iterations in Algorithm~\ref{alg:EF} as in Algorithm~\ref{alg:EF2}, and possibly many more.
\color{black}
\item In the same loop, the number of recursive calls in Algorithm~\ref{alg:EF} can be anywhere between 1 and $|I|$, inclusive, while Algorithm~\ref{alg:EF2} requires 2 recursive calls.
\color{black}
\item \label{rem:recursive-size} The size of the input hypergraph $H'$ for a recursive call in Algorithm~\ref{alg:EF} depends heavily on $F$ and $\alpha$, and would be hard to estimate. In Algorithm~\ref{alg:EF2}, however, the expected size is roughly $\frac{1}{2}|H|$.
\color{black}
\item Constructing an arbitrary bipartition $\{ V_0,V_1\}$ in Algorithm~\ref{alg:EF2} takes a negligible amount of time; however, if we wish to find a bipartition that, as much as possible, equalizes $|V_0|$ and $|V_1|$  --- see Remark~\ref{rems:2}(\ref{rem:bipartition}) --- then we may need to search through all $2^{|I|-1}-1$ bipartitions, and this step may increase the time complexity of Algorithm~\ref{alg:EF2}.
\color{black}
\item \label{rem:H'} In Algorithm~\ref{alg:EF}, the recursive calls use as the input the connected components of $H^\alpha$, while in Algorithm~\ref{alg:EF2}, the two collapsed hypergraphs need to be constructed first, and then (if $F$ contains edges of cardinality at least 3) further modified using Lemma~\ref{lem:fixed_vx}. These operations, however, do not increase time complexity.
\color{black}
\item The time complexities of Algorithms~\ref{alg:EF} and \ref{alg:EF2} can be compared in a similar way, with very similar results, as in Remark~\ref{rems:4}(\ref{rem:compl-T}) below for Algorithms~\ref{alg:ET} and \ref{alg:ET2}.
\end{enumerate}}
\end{rems}
\color{black}
\begin{rems}\label{rems:4}{\rm Comparison of Algorithms~\ref{alg:ET} and \ref{alg:ET2}.
\begin{enumerate}[(i)]
\item The main disadvantage of Algorithm~\ref{alg:ET2} is that it may have to resort to using Algorithm~\ref{alg:ET} if $H$ has no edge cut of cardinality at most 3.
\color{black}
\item The second disadvantage of Algorithm~\ref{alg:ET2} is that it uses an external algorithm to find a minimum edge cut; see Remark~\ref{rems:2}(\ref{rem:min2}). As mentioned in Remark~\ref{rems}(\ref{rem:min}), efficient algorithms for this problem do exist \cite{CheXu}.
\color{black}
\item In the worst case, the main loop in Algorithm~\ref{alg:ET}  requires ${{|I|+1} \choose 2}^{|F|}$ iterations  (see Remark~\ref{rems}(\ref{rem:n-alphas})), while the main loop in Algorithm~\ref{alg:ET2} requires $(2^{|I|-1}-1)\cdot 3^{|F|}$ iterations (see Remark~\ref{rems:2}(\ref{rem:complexity})). If $|I|=2$, then the two numbers are the same; however, assuming $|F|$ is bounded, the number of iterations in Algorithm~\ref{alg:ET2} grows much faster with $|I|$.
\color{black}
\item In the same loop, Algorithm~\ref{alg:ET} requires 1 recursive call, while Algorithm~\ref{alg:ET2} requires 1 or 2 recursive calls, depending on whether Step (8b) or (8c) applies, respectively.
\color{black}
\item In Algorithm~\ref{alg:ET}, a recursive call happens only with a hypergraph $H'$ that is the unique non-trivial connected component of $H^\alpha$, hence its size is close to the size of $|H|$ (but we expect that the number of edge cut assignments $\alpha$ requiring a recursive call will be small). In Algorithm~\ref{alg:ET2}, the expected size of $H'$ is either very close to $|H|$ (if Step (8b) applies) or approximately $\frac{1}{2}|H|$ (if Step (8c) applies).

\color{black}
\item \label{rem:compl-T} For an overall comparison of time complexities, let $\tau(p)$ and $\sigma(p)$ denote the time complexity functions of Algorithms~\ref{alg:ET} and \ref{alg:ET2}, respectively, where $p=|H|$. To facilitate the comparison, we shall assume that at any step of the algorithm (including recursive calls) we have $1 \le |F| \le k$ and $2 \le |I| \le m$ for some constants $k$ and $m$, and that in any recursive call, we have $|H'| \le \frac{p}{c}$ for some constant $c > 1$. In addition, we assume that Step~(\ref{step:hope-not}) of Algorithm~\ref{alg:ET2} is never reached.
    With these assumptions, we have
    $$\tau(p) \approx  {m+1 \choose 2}^{k} \cdot \tau \left(\frac{p}{c} \right) \approx m^{2k} \cdot \tau \left(\frac{p}{c} \right)
    \approx  (m^{2k})^{\log_{c} p} = p^{\log_{c} (m^{2k})}$$
    and
    $$\sigma(p) \approx 2^m 3^k \cdot  \sigma \left( \frac{p}{c} \right) \approx  (2^m 3^k)^{\log_{c} p} = p^{\log_{c} (2^m 3^k)}.$$
    Thus, $\tau(p)$ and $\sigma(p)$ are both polynomial in $p$; however, which of the polynomial orders is smaller depends on the relationship between $k$ and $m$. Roughly speaking, $\log_{c} (2^m 3^k)=m \log_c 2+k \log_c 3$ is smaller when $k$ is larger than $m$, while $\log_{c} (m^{2k})=2k \log_c m$ is smaller when $m$ is much larger than $k$.
\end{enumerate}}
\end{rems}

\color{black}

\bigskip\bigskip

\centerline{\bf Acknowledgement}

\medskip

The first author gratefully acknowledges support by the Natural Sciences and Engineering Research Council of Canada (NSERC), Discovery Grant RGPIN-2022-02994.

\end{document}